\documentclass[a4paper,12pt]{amsart}

\usepackage{a4wide}
\usepackage{pgf,tikz}
\usepackage{amssymb}
\allowdisplaybreaks
\usepackage{enumerate}
	
\usepackage{amsmath}

\newif\ifdetails
\detailstrue
\newcommand{\DETAIL}[1]%
{\ifdetails\par\fbox{\begin{minipage}{0.9\linewidth}\textit{Detail:}
      #1\end{minipage}}\par\fi}
\newcommand{\TODO}[1]%
{\ifdetails\par\fbox{\begin{minipage}{0.9\linewidth}\textbf{TODO:}
      #1\end{minipage}}\par\fi}

\usepackage{makecell}
\usepackage{relsize}

\usepackage[pagewise,displaymath, mathlines]{lineno}

\newtheorem{lemma}{Lemma}
\newtheorem{proposition}[lemma]{Proposition}
\newtheorem{theorem}[lemma]{Theorem}
\newtheorem{corollary}[lemma]{Corollary}
\theoremstyle{remark}

\newtheorem{conjecture}{Conjecture}

\usepackage{caption}
\usepackage{subcaption}

\usepackage{float} 
\usepackage[pdftex,a4paper,
citecolor = blue, colorlinks=true,urlcolor=blue]{hyperref}
\urlstyle{same}

\usepackage{mathrsfs}
\usetikzlibrary{arrows}

\usepackage{xcolor}

\newcommand{\old}[1]{{}}

\title{On the average size of $1$-nearly independent vertex sets in graphs}

\author{Audace A. V. Dossou-Olory}

\author{Eric O. Andriantiana}
\thanks{}

\address{Audace A. V. Dossou-Olory \\  Institut National de l'Eau \\ Centre d'Excellence d'Afrique pour l'Eau et l'Assainissement \\ and  Institut de Math\'ematiques et de Sciences Physiques, Dangbo \\ Universit\'e d'Abomey-Calavi, B\'enin \\ \textbf{ORCID} \url{0000-0003-2065-117X}}
\email{audace@aims.ac.za}

\address{Eric O. D. Andriantiana \\ Department of Mathematics (Pure and Applied) \\ Rhodes University \\ Makhanda, 6140 South Africa \\ and National Institute for Theoretical and Computational Sciences (NITheCS), Stellenbosch, South Africa}
\email{e.andriantiana@ru.ac.za}

\subjclass[2020]{Primary 05C30, 05C69; secondary 05C35, 05C75}
\keywords{1-nearly independent vertex sets, extremal graph structures, average size}

\begin{document}

\begin{abstract}
A $k$-nearly independent vertex subset of a graph $G$ is a set of vertices that induces a subgraph containing exactly $k$ edges. For $k = 0$, this coincides with the classical notion of independent subsets. This paper investigates the average size, $av_1(G)$ of the $1$-nearly independent vertex subsets of both graphs and trees of a given order $n$.

Let $E_n$ denote the $n$-vertex edgeless graph, so that $av_1(E_n) = 0$. We determine all $n$-vertex graphs $G\neq E_n$ that minimize or maximize $av_1$. Similarly, we identify the trees of order $n$ that achieve the minimum value of $av_1$, and prove that the maximum value lies between $n/2$ and $(n+1)/2$ if $n>8$. Finally, we construct a family of $n$-vertex trees which shows that the bounds are asymptotically sharp.





\end{abstract}

\maketitle

\section{Introduction}

Counting substructures in graphs is a central topic in combinatorics and graph theory. Among these, the enumeration of subgraphs---particularly induced subgraphs, spanning subgraphs, independent sets and their generalisations---plays a crucial role in understanding graph complexity and structure. Key questions include which $n$-vertex graphs minimise or maximise the number or average size of independent sets, under various structural constraints? 

An independent vertex set in a graph $G$ is a set of vertices, no two of which are adjacent. The number of independent vertex sets of $G$ is also known as the Fibonacci number or Merrifield-
Simmons index of $G$ in the mathematical literature. Their enumeration has deep ties to certain models in statistical physics via the hard-core model, where each independent set is assigned a weight based on its size (see~\cite{Martin2009}). 

Let $i(G)$ denote the number of independent vertex sets (including the empty set) in graph $G$. Among $n$-vertex graphs, the minimum is achieved by the complete graph $K_n$ with $i(K_n) = n+1$. For graphs with a fixed number of vertices and edges, a classic result of Kahn~\cite{kahn2001} shows that the number of independent sets is maximized by a disjoint union of complete bipartite graphs. Specifically, among $d$-regular bipartite graphs $G$ on $n$ vertices such that $2d | n$, the maximum value of $i(G)$ is achieved by the disjoint union of $n/2d$ copies of $K_{d,d}$, confirming a conjecture of Alon and Kahn. In general graphs, Zhao~\cite{zhao2010} extended Kahn's entropy method to show that among all graphs with maximum degree $\Delta$, the number of independent vertex sets is asymptotically maximized by a disjoint union of $K_{\Delta, \Delta}$. Among regular graphs, the minimum is less understood, but lower bounds have been given in terms of vertex degrees. There are many other work on extremising $i(G)$ when $G$ is restricted to various classes of $n$-vertex graphs such as graphs with a given minimum degree~\cite{Galvin,Gan}, or given degree sequence~\cite{ANDRIANTIANA2013724}.

A \emph{1-nearly independent vertex set} allows exactly one pair of adjacent vertices in the collection. This concept was introduced recently by Andriantiana and Shozi in \cite{shozi2024vertex}, where various of its properties are discussed and extremal $n$-vertex graphs and $n$-vertex trees are characterised.  

The analogous edge version~\cite{shozi2024edge} counts edge-subsets with one adjacent pair; the star minimises and the path maximises this number among $n$-vertex trees. These new generalisations open further directions for extremal and enumerative analyses.

Let $\mathcal{S}_0(G)$ denote the set of all independent vertex subsets of graph $G=(V,E)$. Define the average size of independent vertex sets of $G$ by
\[
\bar{s}(G): = \frac{1}{i(G)} \sum_{I \in \mathcal{S}_0(G)} |I|.
\]
Andriantiana et al.~\cite{andriantiana2018} showed that among all $n$-vertex graphs, $\bar{s}$ is maximised by the edgeless graph and minimised by $K_n$, while among trees, the path minimises $\bar{s}$ and the star maximises it. See also the old short paper~\cite{Nilli1988AverageIndependentSets} which studies the asymptotic behaviour of $\bar{s}(G)$ in terms of order and the chromatic number of $G$,  and the recent results~\cite{Davies2018AverageIndependentSets} giving a tight lower bound on the expected size of a uniformly choosen independent set in triangle-free graphs of bounded degree.

In this manuscript, we investigate the average size $av_1$ of $1$-nearly independent subsets in the class of graphs of fixed order $n$, and subsequently in the class of trees of order $n$. Both upper and lower bounds are established, along with characterizations of the graphs that attain these bounds.

The paper is structured as follows. 
In Section~\ref{Sect:Prelim}, we formally define $av_1$, compute explicit expressions of $av_1$ for graphs that have less complicated structures, and establish some technical recursive formulas. These formulas will later be needed in proving our main theorems.
Section~\ref{Sect: extr} is devoted to the study of average among $n$-vertex graphs. In particular, graphs with the smallest and largest $av_1$ are characterized.
In Section~\ref{sect:tree} we show that the star is the $n$-vertex tree with the smallest $av_1$. We also prove that the maximum value of $av_1$ is less than $(n+1)/2$ for $n>8$. Moreover, we characterise an explicit family of $n$-vertex trees $R_n$ with $av_1(R_n)=n/2 + \delta_n$ for some $0 <\delta_n <1/2$, thus matching our bounds.
Finally, we provide further relations between $0$- and $1$-nearly independent vertex sets in graphs in Section~\ref{Sect:Furth}.

\section{Preliminary results}\label{Sect:Prelim}

Throughout this paper, all graphs are simple. The number of elements of a finite set $S$ is denoted by $|S|$. The empty sum is treated as $0$.

Let $G=(V,E)$ be a simple graph and $l$ a non-negative integer. As defined in \cite{shozi2024vertex}, a set $S\subseteq V$ is called a $l$-nearly independent vertex set of $G$ if the graph induced by $S$ in $G$ contains exactly $l$ edges. Thus $0$-nearly independent vertex sets are precisely the classical independent vertex sets. Denote by $\mathcal{S}_l(G)$ the set of all $l$-nearly independent vertex sets of $G$, and set
\begin{align*}
\sigma_l(G):=|\mathcal{S}_l(G)| \quad \text{and} \quad S_l(G):=\sum_{S \in \mathcal{S}_l(G)} |S|\,,
\end{align*}
that is $\sigma_l(G)$ is the number of $l$-nearly independent vertex sets of $G$, while $S_l(G)$ is the sum of the sizes of the $l$-nearly independent vertex sets of $G$. We are specifically interested in the average size of a $l$-nearly independent vertex set of $G$, which is given by
$$av_l(G):=\frac{S_l(G)}{\sigma_l(G)}\,,$$
provided that $\sigma_l(G)\neq 0$. If  $\mathcal{S}_l(G)=\emptyset$, then it is consistent to write $av_l(G)=0$.


We denote by $\sigma_l(G,k)$ the number of $l$-nearly independent vertex sets of $G$ that have cardinality $k$. Clearly, $\sigma_0(G,0)=1$, while $\sigma_l(G,0)=0$ for all $l>0$ and $\sigma_1(G,k)=0$ for all $k<2$. We define the $l$-nearly series $I_l(G;x)$ by
$$ I_l(G;x)=\sum_{k\geq 0} \sigma_l(G,k) x^k\,,$$ which has a finite number of terms since $\sigma_l(G,k)=0$ for all $k> |V(G)|$. We refer to $I_l(G;x)$ as the $l$-nearly independent vertex polynomial of $G$. It follows that
\begin{align*}
\sigma_l(G)&=\sum_{k\geq 0} \sigma_l(G,k) = I_l(G;1)\quad \text{and}\\
S_l(G)&=\sum_{k\geq 0} k \cdot \sigma_l(G,k) = \frac{d}{dx} I_l(G;x)\Big|_{x=1}:=I_l'(G;1)\,.
\end{align*}

An edge $e$ with end vertices $u,v$ in a graph $G$ will be denoted by $e=uv$. Let $G_1$ and $G_2$ be two graphs such that their vertex sets are disjoints. Then $G_1 \cup G_2$ means the graph with vertex set $V(G_1) \cup V(G_2)$ and edge set $E(G_1) \cup E(G_2)$. In this case, if $S$ is a $1$-nearly independent vertex set of $G_1 \cup G_2$, then the unique edge $e=uv$ in the subgraph induced by $S$ necessarily comes from $G_1$ or $G_2$, and $S\backslash \{u,v\}$ is an independent vertex set of $G_1 \cup G_2$. Conversely, $S\cap V(G_j)$ is either an independent or a $1$-nearly independent vertex set of $G_j$ for $j\in \{1,2\}$, depending on the location of edge $e=uv$.

We therefore obtain the following identities.

\begin{lemma}
\label{Lem:1}
Let $G_1$ and $G_2$ be two vertex disjoint graphs. We have
\begin{align*}
 \sigma_1(G_1 \cup G_2,k)&= \sum_{l=2}^k \sigma_1(G_1,l) \sigma_0(G_2,k-l)  +\sigma_1(G_2,l) \sigma_0(G_1,k-l) \quad \text{and}\\
 \sigma_1(G_1 \cup G_2)&= \sigma_1(G_1) \sigma_0(G_2) + \sigma_1(G_2) \sigma_0(G_1)\,.
\end{align*}
\end{lemma}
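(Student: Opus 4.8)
The plan is to prove the finer, size-graded identity by a direct bijective decomposition, and then obtain the total-count formula by summing over all $k$. First I would fix a $1$-nearly independent vertex set $S$ of $G_1 \cup G_2$ with $|S| = k$ and exploit the fact that, in a disjoint union, there are no edges between $V(G_1)$ and $V(G_2)$. Consequently the unique edge $e = uv$ induced by $S$ lies entirely inside $V(G_1)$ or entirely inside $V(G_2)$, and these two possibilities are mutually exclusive. Writing $S_1 = S \cap V(G_1)$ and $S_2 = S \cap V(G_2)$, in the first case $S_1$ induces exactly the edge $e$ in $G_1$ (so $S_1 \in \mathcal{S}_1(G_1)$) while $S_2$ induces no edge (so $S_2 \in \mathcal{S}_0(G_2)$), and symmetrically in the second case. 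Since any member of $\mathcal{S}_1$ must contain the two endpoints of its single edge, this forces $|S_1| \geq 2$ in the first case and $|S_2| \geq 2$ in the second.

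Next I would verify that this decomposition is reversible, so that it is a genuine bijection rather than merely a partition. Given any pair $(A, B)$ with $A \in \mathcal{S}_1(G_1)$ and $B \in \mathcal{S}_0(G_2)$, the set $A \cup B$ induces precisely the one edge coming from $A$ and hence lies in $\mathcal{S}_1(G_1 \cup G_2)$; the same holds for pairs with $A \in \mathcal{S}_0(G_1)$ and $B \in \mathcal{S}_1(G_2)$. Grading by cardinality, a contribution with $|A| = l$ and $|B| = k - l$ occurs exactly when $2 \leq l \leq k$, where the case $l = k$ is handled by the convention $\sigma_0(G,0) = 1$ for the empty independent part. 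Summing these two disjoint families over $l$ then yields the stated expression for $\sigma_1(G_1 \cup G_2, k)$.

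Finally, the total-count identity follows by summing the graded one over all $k \geq 0$ and recognising a Cauchy product: the double sum factors into marginal sums, since $\sum_{k} \sum_{l} \sigma_1(G_1,l)\sigma_0(G_2,k-l) = \bigl(\sum_{l} \sigma_1(G_1,l)\bigr)\bigl(\sum_{m} \sigma_0(G_2,m)\bigr) = \sigma_1(G_1)\sigma_0(G_2)$, and symmetrically for the second family, giving $\sigma_1(G_1)\sigma_0(G_2) + \sigma_1(G_2)\sigma_0(G_1)$.

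There is no substantial obstacle here; the argument is entirely combinatorial bookkeeping. The only points requiring care are confirming that the two cases (edge in $G_1$ versus edge in $G_2$) are disjoint, so that no set is double-counted, respecting the lower bound $l \geq 2$ imposed by the induced edge, and correctly invoking the conventions $\sigma_0(G,0) = 1$ and $\sigma_1(G,k) = 0$ for $k < 2$ so that the summation limits come out exactly as written.
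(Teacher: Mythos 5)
Your proposal is correct and follows essentially the same route as the paper, which derives the lemma from exactly this decomposition: the unique induced edge must lie wholly in $G_1$ or wholly in $G_2$, splitting $S$ into a $1$-nearly independent part and an independent part, with the total-count identity obtained by summing over cardinalities. Your added care about the reversibility of the decomposition and the conventions $\sigma_0(G,0)=1$, $\sigma_1(G,k)=0$ for $k<2$ merely makes explicit what the paper leaves implicit.
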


\medskip 

We also obtain the following.
\begin{lemma}\label{G1UnionG2}
Let $G_1$ and $G_2$ be two vertex disjoint graphs. We have
\begin{align}
\label{Eq:Er1}
  I_1(G_1 \cup G_2;x)&=I_1(G_1;x)I_0(G_2;x)+I_1(G_2;x)I_0(G_1;x)\,\text{ and }\\
  \label{Eq:Er2}
 S_1(G_1 \cup G_2)&=S_1(G_1)\sigma_0(G_2)+\sigma_1(G_1)S_0(G_2) + S_1(G_2)\sigma_0(G_1)+\sigma_1(G_2)S_0(G_1)  \,.
\end{align}
\end{lemma}

\begin{proof}
Formula~\eqref{Eq:Er1} follows immediately from Lemma \ref{Lem:1}. For \eqref{Eq:Er2},
simply note that 
 \begin{align*}
  &I_1'(G_1 \cup G_2;x)=I_1'(G_1;x)I_0(G_2;x)+ I_1(G_1;x)I_0'(G_2;x) \\
  & \hspace{4cm} +I_1'(G_2;x)I_0(G_1;x) + I_1(G_2;x)I_0'(G_1;x) \,.
 \end{align*}
 \end{proof}

\medskip
Let $G=(V,E)$ be a graph. For $S\subseteq V$, we denote by $G-S$ the graph induced by $V\backslash S$ in $G$. For simplicity, we write $G-v$ instead of $G-\{v\}$. We use the standard notation that $N(v)$ is the set of all neighbours of $v\in V$ in $G$ and $N[v]=N(v)\cup \{v\}$.

We can count the contribution of $v$ to $\sigma_1(G,k)$ as follows. If $v$ is an element of $S\in \mathcal{S}_1(G)$, then $S$ can only contain at most one neighbour of $v$ in $G$. If $S$ contains both $v$ and $u\in N(v)$, then no other element of $N(v) \cup N(u)$ can belong to $S$ and $S\backslash \{u,v\}$ is an independent vertex set of $G$. Thus the contribution of $v$ to $\sigma_1(G,k)$ is given by
 \begin{align*}
 \sigma_1(G-N[v],k-1) + \sum_{u \in N(v)} \sigma_0(G-(N[v]\cup N[u]),k-2)\,
 \end{align*}
 and thus
  \begin{align*}
 \sigma_1(G,k)= \sigma_1(G-v,k)+ \sigma_1(G-N[v],k-1) + \sum_{u \in N(v)} \sigma_0(G-(N[v]\cup N[u]),k-2)\,.
 \end{align*}
Note that $G-(N[v]\cup N[u])$ and $G-(N(v)\cup N(u))$ are isomorphic graphs since $uv\in E$. 

\medskip

The next lemma follows immediately, and its proof is omitted. Note that the recursive formula for $S_1$ is obtained by differentiating the expression for $I_1$ and evaluating it at $x = 1$.

\begin{lemma}\label{lem:g-v}
Let $G$ be a graph and $v$ a vertex of $G$. We have
\begin{align*}
I_1(G;x)&=I_1(G-v;x)+xI_1(G-N[v];x)+x^2\sum_{u\in N(v)}I_0 \big( G-(N(v)\cup N(u)); x \big)\,,\\
\sigma_1(G)&=\sigma_1(G-v)+\sigma_1(G-N[v]) + \sum_{u\in N(v)}\sigma_0\big(G-(N(v)\cup N(u)) \big) \,, \quad \text{ and }\\
S_1(G)&=S_1(G-v)+\sigma_1(G-N[v]) + S_1(G-N[v])\\
 & ~ + \sum_{u\in N(v)}\Big( 2\sigma_0\big(G-(N(v)\cup N(u)) \big) + S_0\big(G-(N(v)\cup N(u))\big) \Big)\,.
\end{align*}
\end{lemma}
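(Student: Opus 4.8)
The plan is to derive all three identities from the single combinatorial recursion for $\sigma_1(G,k)$ established immediately before the lemma, namely
$$\sigma_1(G,k)= \sigma_1(G-v,k)+ \sigma_1(G-N[v],k-1) + \sum_{u \in N(v)} \sigma_0(G-(N[v]\cup N[u]),k-2)\,.$$
First I would multiply both sides by $x^k$ and sum over $k\geq 0$. The first term reproduces $I_1(G-v;x)$. For the second term, the index shift $j=k-1$ turns $\sum_{k} \sigma_1(G-N[v],k-1)x^k$ into $x\,I_1(G-N[v];x)$, and for the third term the shift $j=k-2$ produces $x^2\sum_{u\in N(v)} I_0(G-(N[v]\cup N[u]);x)$. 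Using the identity $N[v]\cup N[u]=N(v)\cup N(u)$, valid whenever $uv\in E$ (as noted just before the lemma), this is exactly the claimed formula for $I_1(G;x)$.

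Next I would recover the $\sigma_1$ identity by evaluating the polynomial identity at $x=1$, since $I_l(H;1)=\sigma_l(H)$ for every graph $H$ appearing on either side.

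For the $S_1$ identity I would differentiate the $I_1$ identity in $x$ and then set $x=1$. The product rule applied to $x\,I_1(G-N[v];x)$ gives $I_1(G-N[v];x)+x\,I_1'(G-N[v];x)$, and applied to each summand $x^2 I_0(H;x)$ with $H=G-(N(v)\cup N(u))$ it gives $2x\,I_0(H;x)+x^2 I_0'(H;x)$. Evaluating at $x=1$ and invoking $I_l(H;1)=\sigma_l(H)$ together with $I_l'(H;1)=S_l(H)$ then yields the $\sigma_1(G-N[v])$, $S_1(G-N[v])$, $2\sigma_0(H)$ and $S_0(H)$ terms recorded in the statement.

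I expect no genuine conceptual obstacle: the entire content sits in the combinatorial recursion for $\sigma_1(G,k)$, which the surrounding discussion already justifies by splitting on whether $v\in S$ and, when $v\in S$, on whether $S$ contains a (necessarily unique) neighbour of $v$. The only thing demanding care is bookkeeping — tracking the shifts $k\mapsto k-1$ and $k\mapsto k-2$ correctly and applying the product rule to the factors $x$ and $x^2$ — which is precisely what makes the differentiated $x^2 I_0$ term contribute both a $2\sigma_0$ and an $S_0$ summand. This is presumably why the authors deem the result immediate and omit the proof.
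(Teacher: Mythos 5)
Your proposal is correct and follows exactly the route the paper intends: the $I_1$ identity is the generating-function form of the displayed recursion for $\sigma_1(G,k)$ (using $N[v]\cup N[u]=N(v)\cup N(u)$ for $uv\in E$), the $\sigma_1$ identity is its evaluation at $x=1$, and the $S_1$ identity comes from differentiating and setting $x=1$, which is precisely the remark the authors make before omitting the proof. Your bookkeeping of the shifts and the product rule (yielding the $\sigma_1(G-N[v])$ term and the $2\sigma_0+S_0$ terms) is accurate.
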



\medskip
In what follows, we compute $\sigma_1(G), S_1(G)$ for a few classes of graphs. We use the following usual notation for $n$-vertex graphs:
\begin{itemize}
\item $E_n$: graph with no edges (edgeless graph),
\item $S_n$: star (a central vertex adjacent to $n-1$ pendent edges),
\item $K_n$: complete graph (every two vertices are adjacent),
\item $P_n$: path (a tree with maximum degree less than or equal to $2$).
\end{itemize}

\begin{proposition}\label{Prop:Few} 
\begin{align*}
&\sigma_1(E_n)=S_1(E_n)=0\,,\\
&\sigma_1(S_n)=n-1,~ S_1(S_n)=2(n-1)\,, \quad \text{ and }\\
&\sigma_1(K_n)=n(n-1)/2, ~S_1(K_n)=n(n-1).
\end{align*}

\end{proposition}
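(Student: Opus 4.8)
The plan is to treat each of the three graph families separately, in each case directly characterising $\mathcal{S}_1$ and then reading off both $\sigma_1(G)$ and $S_1(G)$ from that explicit description. Since all three graphs have a very rigid edge structure, a complete enumeration of the $1$-nearly independent vertex sets is feasible, and both quantities follow immediately. (As a check one could instead invoke the deletion recursion of Lemma~\ref{lem:g-v} at the centre of the star or at any vertex of $K_n$, but the direct counting argument is cleaner and self-contained.)

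For the edgeless graph $E_n$, I would simply observe that $E_n$ has no edges, so no vertex subset can induce a subgraph containing exactly one edge; hence $\mathcal{S}_1(E_n)=\emptyset$. This gives $\sigma_1(E_n)=0$ at once, and $S_1(E_n)=0$ as an empty sum, in accordance with the conventions fixed in Section~\ref{Sect:Prelim}.

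For the star $S_n$ with centre $c$ and leaves $v_1,\dots,v_{n-1}$, the key observations are that the only edges are the pendant edges $cv_i$ and that any set consisting solely of leaves is independent. Consequently, a set $S$ inducing exactly one edge must contain $c$ (otherwise $S$ induces no edge at all), and it can contain at most one leaf, since two distinct leaves $v_i,v_j\in S$ would force the two edges $cv_i$ and $cv_j$ into the induced subgraph. Conversely, each $\{c,v_i\}$ does induce precisely the single edge $cv_i$. Therefore $\mathcal{S}_1(S_n)=\{\{c,v_i\}:1\le i\le n-1\}$, which has exactly $n-1$ members, each of size $2$; this yields $\sigma_1(S_n)=n-1$ and $S_1(S_n)=2(n-1)$. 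For the complete graph $K_n$, I would use that the subgraph induced on any $S\subseteq V$ is $K_{|S|}$, with $\binom{|S|}{2}$ edges; requiring exactly one edge forces $\binom{|S|}{2}=1$, i.e. $|S|=2$, and conversely every $2$-element subset induces a single edge. Thus $\mathcal{S}_1(K_n)$ consists precisely of the $\binom{n}{2}=n(n-1)/2$ pairs of vertices, whence $\sigma_1(K_n)=n(n-1)/2$ and $S_1(K_n)=2\cdot n(n-1)/2=n(n-1)$.

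There is essentially no serious obstacle here; the only point requiring genuine care is the exhaustiveness of the case analysis for the star, namely verifying both that $c$ must belong to every $S\in\mathcal{S}_1(S_n)$ and that at most one leaf may accompany it. Once that is settled the remaining steps are routine enumeration, and the uniform size $|S|=2$ across all three families makes the passage from $\sigma_1$ to $S_1$ immediate.
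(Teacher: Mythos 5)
Your proof is correct, and it takes the opposite route to the paper's: you enumerate $\mathcal{S}_1$ explicitly for each family, whereas the paper (after remarking that ``all these expressions are easily obtained by direct counting arguments'') deliberately runs the deletion recursions of Lemma~\ref{lem:g-v} instead --- deleting the centre of $S_n$ to reduce to $E_{n-1}$ and the empty graph, and deleting a vertex of $K_n$ to get the recurrences $\sigma_1(K_n)=\sigma_1(K_{n-1})+(n-1)$ and $S_1(K_n)=S_1(K_{n-1})+2(n-1)$, which are then solved. Your approach is the cleaner and more self-contained one for these rigid structures: by identifying $\mathcal{S}_1(S_n)=\{\{c,v_i\}:1\le i\le n-1\}$ and $\mathcal{S}_1(K_n)$ as the set of all $2$-element subsets, you see at once that every set has size $2$, so $S_1=2\sigma_1$ falls out uniformly across all three families. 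What the paper's route buys instead is a worked illustration and sanity check of the recursive machinery that is the workhorse later in the paper (e.g.\ in the computation of $av_1(R_n)$ in Section~\ref{sect:tree}); you correctly anticipated this alternative in your opening remark. Your case analysis for the star is complete --- $c$ must lie in $S$ since all edges are incident to $c$, and exactly one leaf may accompany it --- so there is no gap.
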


\begin{proof}
All these expressions are easily obtained by direct counting arguments. Nevertheless, let us use the above decomposition formulas. $E_n$ has no edge, so $\mathcal{S}_1(E_n)=\emptyset$. Let $v$ be the central vertex of $S_n$, we have $S_n-v=E_{n-1}$, $S_n-N(v)=\emptyset$, and
\begin{align*}
\sigma_1(S_n)&=\sigma_1(E_{n-1})+\sigma_1(\emptyset)+\sum_{u\in N(v)}\sigma_0(\emptyset) =|N(v)|=n-1\,,\\
S_1(S_n)&=S_1(E_{n-1})+\sigma_1(\emptyset)+ S_1(\emptyset)+\sum_{u\in N(v)}(2\sigma_0(\emptyset)+S_0(\emptyset)) =2|N(v)|=2(n-1)\,.
\end{align*}
For the complete graph,
\begin{align*}
\sigma_1(K_n)&=\sigma_1(K_{n-1})+\sigma_1(\emptyset) + \sum_{u\in N(v)}\sigma_0(\emptyset)=\sigma_1(K_{n-1})+ |N(v)|=\sigma_1(K_{n-1})+ n-1\,,\\
S_1(K_n)&=S_1(K_{n-1})+\sigma_1(\emptyset) + S_1(\emptyset) + \sum_{u\in N(v)}\big( 2\sigma_0 (\emptyset) + S_0(\emptyset) \big)\\
&= S_1(K_{n-1})+ 2|N(v)|=S_1(K_{n-1})+2(n-1)\,.
\end{align*}
Solving these recursions yield $\sigma_1(K_n)=n(n-1)/2$ and $S_1(K_n)=n(n-1)$.
\end{proof}

\medskip
Let $G=(V,E)$ be a graph. Since every element of $\mathcal{S}_1(G)$ induces a graph that contains only one edge, the set $\mathcal{S}_1(G)$ can be partitioned into 
\begin{align*}
\mathcal{S}_1(G)=\bigcup_{e\in E}~ \bigcup_{k\geq 2}~ \mathcal{S}^{e}_{1,k}(G)\,,
\end{align*}
where $\mathcal{S}^{e}_{1,k}(G)$ comprises all those $k$-element $1$-nearly independent vertex sets involving the end vertices of edge $e$. One can therefore determine $|\mathcal{S}^{e}_{1,k}(G)|$, the contribution of edge $e=uv$ to $\sigma_1(G,k)$. Following the same argument given above, we get
\begin{align*}
|\mathcal{S}^{e}_{1,k}(G)|&=\sigma_0\big(G-(N(v) \cup N(u)), k-2 \big)\,,\\
\sigma_1(G,k)&=\sum_{uv\in E} \sigma_0\big(G-(N(v) \cup N(u)), k-2 \big)\,.
\end{align*}

As an immediate consequence, we obtain the following formulas:
\begin{lemma}\label{av1Exp}
For every graph $G$, it holds that
\begin{align*}
I_1(G;x)&=x^2 \sum_{uv \in E} I_0(G-(N(v) \cup N(u)); x)\,,\\
\sigma_1(G)&=\sum_{uv \in E} \sigma_0\big(G-(N(v) \cup N(u))\big)\,,\\
S_1(G)&=\sum_{uv \in E} \Big(2\sigma_0\big(G-(N(v) \cup N(u))\big) +S_0\big(G-(N(v) \cup N(u))\big) \Big)\\
&=\sum_{uv \in E} \sigma_0\big(G-(N(v) \cup N(u))\big)\Big(2 +av_0\big(G-(N(v) \cup N(u))\big) \Big)\,.
\end{align*}
\end{lemma}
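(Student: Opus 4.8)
The plan is to derive all four identities in sequence from the single per-edge contribution formula
\[
\sigma_1(G,k)=\sum_{uv\in E}\sigma_0\big(G-(N(v)\cup N(u)),k-2\big)
\]
established in the paragraph immediately preceding the statement. Write $H_{uv}:=G-(N(v)\cup N(u))$ for brevity. Everything else amounts to repackaging this combinatorial identity into generating-function form and then reading off its value and derivative at $x=1$.

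First I would obtain the polynomial identity. Multiplying both sides by $x^k$ and summing over $k\geq 0$ gives $I_1(G;x)=\sum_k\sum_{uv\in E}\sigma_0(H_{uv},k-2)\,x^k$. As both index ranges are finite, the two summations may be interchanged freely, and the substitution $j=k-2$ rewrites the inner sum as $x^2\sum_{j}\sigma_0(H_{uv},j)\,x^{j}=x^2 I_0(H_{uv};x)$. The support condition $k\geq 2$ built into $\mathcal{S}^{e}_{1,k}(G)$ is precisely what makes this shift clean, with no stray low-order terms; summing over the edges yields the first formula, the factor $x^2$ recording the two forced endpoints of the unique induced edge.

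Next, evaluating this identity at $x=1$ and using $\sigma_l(G)=I_l(G;1)$ gives the second formula at once. For the third, I would differentiate by the product rule, obtaining $I_1'(G;x)=2x\sum_{uv}I_0(H_{uv};x)+x^2\sum_{uv}I_0'(H_{uv};x)$, and then set $x=1$, using $S_l(G)=I_l'(G;1)$ together with $I_0(H_{uv};1)=\sigma_0(H_{uv})$ and $I_0'(H_{uv};1)=S_0(H_{uv})$; this produces $S_1(G)=\sum_{uv}\big(2\sigma_0(H_{uv})+S_0(H_{uv})\big)$. The last displayed form then follows by factoring $\sigma_0(H_{uv})$ out of each summand and recognising $S_0(H_{uv})/\sigma_0(H_{uv})=av_0(H_{uv})$.

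Since the combinatorial core has already been supplied, no substantial obstacle remains; the only points needing a moment's care are the index shift that yields the $x^2$ (keeping track of the $k\geq 2$ support) and the well-definedness of $av_0$ in the final line. The latter is harmless: $\sigma_0(H)\geq 1$ for every graph $H$, since the empty set is always independent, so each $H_{uv}$---even when it has no vertices---satisfies $\sigma_0(H_{uv})\geq 1$ and the division defining $av_0$ never fails.
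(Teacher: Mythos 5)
Your proposal is correct and takes essentially the same route as the paper, which derives the lemma as an immediate consequence of the per-edge partition formula $\sigma_1(G,k)=\sum_{uv\in E}\sigma_0\big(G-(N(v)\cup N(u)),k-2\big)$ established just beforehand, with the generating-function packaging, evaluation at $x=1$, and differentiation exactly as you carry them out. Your added remarks on the index shift producing the $x^2$ factor and on $\sigma_0(H_{uv})\geq 1$ (so $av_0$ is well defined in the final line) merely make explicit details the paper leaves implicit.
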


\medskip
For example, setting $E(P_n):=\{v_0v_1, v_1v_2, \ldots, v_{n-2}v_{n-1}\}$ and using this lemma we can compute $\sigma_1(P_n)$ and $S_1(P_n)$ as
\begin{align*}
\sigma_1(P_n)=\sum_{j=1}^{n-1} \sigma_0(P_{j-2} \cup P_{n-2-j} )=\sum_{j=1}^{n-1} \sigma_0(P_{j-2} ) \sigma_0( P_{n-2-j})\,,
\end{align*}
where the second equality comes from Lemma~\ref{0G1UnionG2} below.

\begin{lemma}[\cite{andriantiana2018}]\label{0G1UnionG2}
Let $G_1$ and $G_2$ be two vertex disjoint graphs. We have
\begin{align*}
\sigma_0(G_1 \cup G_2)&= \sigma_0(G_1) \sigma_0(G_2)\,,\\
av_0(G_1 \cup G_2)&= av_0(G_1)+ av_0(G_2)\,.
\end{align*}
\end{lemma}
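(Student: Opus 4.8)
The plan is to reduce both identities to a single structural observation: because $G_1 \cup G_2$ has no edge joining $V(G_1)$ to $V(G_2)$, a set $S \subseteq V(G_1) \cup V(G_2)$ induces no edge precisely when each of its two traces $S \cap V(G_1)$ and $S \cap V(G_2)$ induces no edge in the respective $G_j$. This yields a bijection
\[
\Phi \colon \mathcal{S}_0(G_1 \cup G_2) \longrightarrow \mathcal{S}_0(G_1) \times \mathcal{S}_0(G_2), \qquad S \longmapsto (S \cap V(G_1), S \cap V(G_2)),
\]
with inverse $(I_1, I_2) \mapsto I_1 \cup I_2$. Counting the two sides immediately gives $\sigma_0(G_1 \cup G_2) = \sigma_0(G_1)\sigma_0(G_2)$.

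For the averaging identity, I would track sizes through $\Phi$. Since $V(G_1)$ and $V(G_2)$ are disjoint, $\Phi(S) = (I_1, I_2)$ satisfies $|S| = |I_1| + |I_2|$, so
\[
S_0(G_1 \cup G_2) = \sum_{I_1 \in \mathcal{S}_0(G_1)} \sum_{I_2 \in \mathcal{S}_0(G_2)} \big(|I_1| + |I_2|\big) = \sigma_0(G_2)\, S_0(G_1) + \sigma_0(G_1)\, S_0(G_2),
\]
where the last step splits the double sum and uses $\sigma_0(G_j) = \sum_{I_j} 1$. Dividing by $\sigma_0(G_1 \cup G_2) = \sigma_0(G_1)\sigma_0(G_2)$ and cancelling then gives $av_0(G_1 \cup G_2) = av_0(G_1) + av_0(G_2)$, as claimed.

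A cleaner packaging of the same computation, which I would mention as an alternative, refines $\Phi$ by cardinality to obtain the product formula $I_0(G_1 \cup G_2; x) = I_0(G_1; x)\, I_0(G_2; x)$ (the $0$-nearly analogue of \eqref{Eq:Er1}). Writing $av_0(G) = I_0'(G;1)/I_0(G;1) = \frac{d}{dx}\log I_0(G;x)\big|_{x=1}$ turns the multiplicativity of $I_0$ into the additivity of $av_0$ at one stroke, since $\log$ sends products to sums. In either route there is no genuine obstacle: the only points needing a word of care are that $\mathcal{S}_0(G)$ always contains the empty set, so $\sigma_0(G) \geq 1$ and $av_0$ is well defined with a nonvanishing denominator, and that the bijection $\Phi$ behaves correctly in the degenerate cases where one of the $G_j$ is empty.
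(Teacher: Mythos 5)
Your proof is correct and complete. Note that the paper itself gives no proof of this lemma: it is quoted with a citation to Andriantiana, Misanantenaina and Wagner (2018), so there is no in-paper argument to compare against. Your bijection $S \mapsto (S \cap V(G_1), S \cap V(G_2))$ together with the size-tracking double sum is the standard argument and exactly what the cited source relies on; your alternative packaging via the multiplicativity $I_0(G_1 \cup G_2; x) = I_0(G_1; x)\, I_0(G_2; x)$ and the logarithmic derivative at $x=1$ is also the same technique the present paper uses to prove its $1$-nearly analogue, Lemma~\ref{G1UnionG2}, where \eqref{Eq:Er2} is obtained by differentiating the polynomial identity \eqref{Eq:Er1} and evaluating at $x=1$. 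Your attention to the nonvanishing denominator ($\emptyset \in \mathcal{S}_0(G)$, so $\sigma_0(G) \geq 1$) is a point the paper leaves implicit.
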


\medskip

The next lemma provides an alternative formula for $av_1(G)$ that will be useful in some cases.

\begin{lemma}\label{av_1Instance}
If $G$ is not an edgless graph, then
\begin{align*}
av_1(G)=\frac{S_1(G)}{\sigma_1(G)}=2+ \frac{\displaystyle\sum_{uv \in E} S_0\big(G-(N(v) \cup N(u))\big)}{\displaystyle\sum_{uv \in E} \sigma_0\big(G-(N(v) \cup N(u))\big)}\,.
\end{align*}
Furthermore, there exists edges $u_1v_1, u_2v_2$ of $G$ such that
\begin{align*}
2+  av_0\big(G-(N(v_1) \cup N(u_1))\big)  \leq av_1(G) \leq 2+ av_0\big(G-(N(v_1) \cup N(u_1))\big)\,.
\end{align*}
\end{lemma}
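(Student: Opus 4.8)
The plan is to derive both claims directly from Lemma~\ref{av1Exp}, which already expresses $\sigma_1(G)$ and $S_1(G)$ as sums over the edges of $G$.

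First I would abbreviate $H_e := G-(N(v)\cup N(u))$ for each edge $e=uv$. Lemma~\ref{av1Exp} gives $\sigma_1(G)=\sum_{e\in E}\sigma_0(H_e)$ and $S_1(G)=\sum_{e\in E}\sigma_0(H_e)\big(2+av_0(H_e)\big)$. Because $G$ is not edgeless, $E\neq\emptyset$ and every $\sigma_0(H_e)\geq 1$, so $\sigma_1(G)>0$ and $av_1(G)=S_1(G)/\sigma_1(G)$ is well defined. Splitting the numerator and using $\sigma_0(H_e)\,av_0(H_e)=S_0(H_e)$ then immediately gives
\[
av_1(G)=\frac{\sum_{e\in E}\big(2\sigma_0(H_e)+S_0(H_e)\big)}{\sum_{e\in E}\sigma_0(H_e)}=2+\frac{\sum_{e\in E}S_0(H_e)}{\sum_{e\in E}\sigma_0(H_e)},
\]
which is the first displayed identity.

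For the second part I would interpret $av_1(G)-2$ as a weighted average. Setting $a_e:=\sigma_0(H_e)$, the empty set is always an independent vertex set of $H_e$, so $a_e\geq 1>0$ for every edge. Hence
\[
av_1(G)-2=\frac{\sum_{e\in E}a_e\,av_0(H_e)}{\sum_{e\in E}a_e}
\]
is a convex combination of the finitely many numbers $av_0(H_e)$. By the elementary mediant (weighted-average) inequality, a positively weighted mean lies between the smallest and largest of the averaged values. I would then pick an edge $u_1v_1$ attaining $\min_{e}av_0(H_e)$ and an edge $u_2v_2$ attaining $\max_{e}av_0(H_e)$ — both exist since $E$ is finite and nonempty — to conclude that $2+av_0(H_{u_1v_1})\leq av_1(G)\leq 2+av_0(H_{u_2v_2})$. (The upper bound as printed appears to repeat the subscript $u_1v_1$ where $u_2v_2$ is intended, since otherwise the two bounds would coincide.)

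I do not expect any real obstacle here: everything reduces to Lemma~\ref{av1Exp} together with the fact that a positively weighted mean lies between its extremes. The only point deserving explicit mention is the strict positivity of the weights $\sigma_0(H_e)$, which guarantees both that $av_1(G)$ is well defined and that the mediant inequality applies; this holds because $\sigma_0\geq 1$ for every graph.
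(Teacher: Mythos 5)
Your proposal is correct and follows essentially the same route as the paper's own proof: both derive the identity from Lemma~\ref{av1Exp} and then read $av_1(G)-2$ as a positively weighted average of the values $av_0\big(G-(N(v)\cup N(u))\big)$, which must lie between its extreme terms. You also rightly note the typo in the statement (the upper bound should involve $u_2v_2$, not $u_1v_1$) and make explicit the positivity $\sigma_0(H_e)\geq 1$, which the paper uses implicitly.
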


\begin{proof}
According to Lemma~\ref{av1Exp},
\begin{align*}
av_1(G)=2+ \frac{\displaystyle\sum_{uv \in E} S_0\big(G-(N(v) \cup N(u))\big)}{\displaystyle\sum_{uv \in E} \sigma_0\big(G-(N(v) \cup N(u))\big)}\,.
\end{align*}
Now set $H_{uv}:=G-(N(v) \cup N(u))$ and note that
\begin{align*}
av_1(G)=2+ \frac{\displaystyle\sum_{uv  E} S_0(H_{uv})}{\displaystyle\sum_{uv \in E} \sigma_0 (H_{uv})}=2+\sum_{uv \in E}\alpha_{uv}\frac{S_0(H_{uv})}{\sigma_0(H_{uv})}=2+ \sum_{uv \in E} \alpha_{uv}av_0(H_{uv})\,,
\end{align*}
where $\alpha_{uv}=\sigma_0(H_{uv}) / \sum_{uv \in E} \sigma_0(H_{uv})$. Since $\alpha_{uv}>0$ and $\sum_{uv \in E} \alpha_{uv}= 1$, we can interpret $\sum_{uv \in E}\alpha_{uv}\frac{S_0(H_{uv})}{\sigma_0(H_{uv})}$ as an average. Hence, there is a case of $uv$ that corresponds to a term not less than the average, and there is a case that corresponds to a term not larger than the average. The lemma follows.
\end{proof}

\medskip

In the next section, we consider graphs with a given order $n$.





\section{Extremal $n$-vertex graphs} \label{Sect: extr}

In this section, we characterise the structure of graphs that minimise or maximise $av_1$.  Trivially, $av_1(G)\geq 0$ with equality if and only if $G$ is an edgeless graph. Here and in the following, we assume $G\neq E_n$ for all $n$.

In~\cite{andriantiana2018} it was proved that the star maximises $av_0$ among all $n$-vertex trees, while~\cite{shozi2024vertex} shows that the star minimises $\sigma_1$ among all $n$-vertex connected graphs. It will be shown that the star also uniquely achieves the minimum value of $av_1$ among connected graphs.  

\medskip
In paper~\cite{shozi2024vertex}, a {\it good graph} $G=(V,E)$ is defined as a graph in which every edge $uv$ satisfies $N(v) \cup N(u)=V$. A full characterisation of the set $\mathcal{H}$ of all good graphs is provided there. For any $H\in\mathcal{H}\smallsetminus\{K_1\}$, we have $av_1(H)=2$. Hence we have the following first main theorem of this paper.

\begin{theorem}\label{Thm:ngraph}
For every $n$-vertex graph $G$ that is not edgeless, we have $$av_1(G)\geq  2\,.$$ Equality holds if and only if $G \in \mathcal{H}\backslash \{ K_1 \}$.

In particular, $S_n$ uniquely minimises $av_1$ among $n$-vertex trees, and it is one of the $n$-vertex graphs that minimises $av_1$.
\end{theorem}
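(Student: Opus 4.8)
The plan is to establish the inequality $av_1(G)\geq 2$ directly from the alternative formula in Lemma~\ref{av_1Instance}, namely
\[
av_1(G)=2+\frac{\sum_{uv\in E}S_0(H_{uv})}{\sum_{uv\in E}\sigma_0(H_{uv})},
\]
where $H_{uv}:=G-(N(v)\cup N(u))$. First I would observe that since $G$ is not edgeless, the edge set $E$ is nonempty, so the denominator $\sum_{uv\in E}\sigma_0(H_{uv})$ is strictly positive (each $\sigma_0(H_{uv})\geq 1$, counting at least the empty set). Meanwhile, each $S_0(H_{uv})\geq 0$ since it is a sum of sizes of (nonnegative) cardinalities. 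Therefore the added fraction is nonnegative, giving $av_1(G)\geq 2$ immediately.

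The heart of the statement is the equality characterisation. Equality $av_1(G)=2$ holds if and only if $\sum_{uv\in E}S_0(H_{uv})=0$, which (since every summand is nonnegative) forces $S_0(H_{uv})=0$ for every edge $uv\in E$. Now I would argue that $S_0(H_{uv})=0$ means every independent set of $H_{uv}$ has size $0$, i.e.\ the only independent set of $H_{uv}$ is the empty set. But the empty set is always independent, and any single vertex of $H_{uv}$ forms an independent set of size $1$; hence $S_0(H_{uv})=0$ forces $H_{uv}$ to have no vertices, i.e.\ $H_{uv}=\emptyset$. Unwinding the definition, $H_{uv}=G-(N(v)\cup N(u))$ has no vertices precisely when $N(v)\cup N(u)=V$. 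Since this must hold for every edge $uv\in E$, the condition is exactly that $G$ is a good graph in the sense of~\cite{shozi2024vertex}, i.e.\ $G\in\mathcal{H}$. Excluding the edgeless cases and $K_1$ (which has no edge), this gives $G\in\mathcal{H}\setminus\{K_1\}$, and conversely any such $H$ satisfies $av_1(H)=2$ as already noted in the excerpt.

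For the ``in particular'' claim, I would use Proposition~\ref{Prop:Few}, which gives $\sigma_1(S_n)=n-1$ and $S_1(S_n)=2(n-1)$, hence $av_1(S_n)=2$ directly. To see that the star is among the minimisers, note $S_n\in\mathcal{H}$: for the unique central vertex $c$ and any leaf $\ell$, the edge $c\ell$ satisfies $N(c)\cup N(\ell)=V(S_n)$ since $c$ is adjacent to all other vertices. For uniqueness among trees, I would argue that a tree $T$ with $av_1(T)=2$ must lie in $\mathcal{H}$, and then invoke the characterisation of good graphs: a good tree must have every edge's closed neighbourhood union cover all vertices, which for a tree forces the diameter to be small. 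The main obstacle will be pinning down this last point cleanly. The key observation is that in a tree, any edge $uv$ with $N(u)\cup N(v)=V$ forces every vertex to be adjacent to $u$ or to $v$; if the tree had a path of length $\geq 4$ or two edges sufficiently far apart, one could exhibit an edge violating the good-graph condition or a vertex outside $N(u)\cup N(v)$. I expect the cleanest route is to show that the only tree in which \emph{every} edge covers all vertices via closed neighbourhoods is the star, by noting that a non-star tree contains a path $P_4$ whose middle edge leaves the two endpoints uncovered, so $S_n$ is the unique tree in $\mathcal{H}$.
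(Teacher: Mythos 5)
Your proof is essentially the paper's proof: the paper also derives $av_1(G)\geq 2$ from Lemma~\ref{av_1Instance}, characterises equality by $S_0\big(G-(N(v)\cup N(u))\big)=0$ for every edge $uv$, which forces each $G-(N(v)\cup N(u))$ to be the empty graph, i.e.\ $N(v)\cup N(u)=V$ for all edges, and then invokes the good-graph class $\mathcal{H}$ of~\cite{shozi2024vertex} (noting $K_1$ is its only edgeless member). Your treatment of the main inequality and the equality condition is correct and matches that argument step for step; you go slightly beyond the paper by proving directly that the star is the unique good tree, where the paper implicitly defers to the characterisation of $\mathcal{H}$ in~\cite{shozi2024vertex}.

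One detail in that added argument is wrong, though easily repaired. You claim a non-star tree contains a path $P_4$ ``whose middle edge leaves the two endpoints uncovered.'' In a subpath $a$--$b$--$c$--$d$ the middle edge is $bc$, and $a\in N(b)$, $d\in N(c)$, so the middle edge \emph{does} cover both endpoints; it is the \emph{end} edges that fail. The correct witness: a non-star tree has diameter at least $3$, hence contains an induced path $a$--$b$--$c$--$d$; for the edge $ab$ one has $d\notin N(a)\cup N(b)$, since $d$ adjacent to $a$ or $b$ would create a cycle in the tree. With this substitution your uniqueness argument for trees is sound, and the rest of the proposal stands as written.
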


\begin{proof}
By Lemma~\ref{av_1Instance}, $av_1(G)\geq 2$ for all $G=(V,E)$ that are not edgeless, and equality holds if and only if $S_0\big(G-(N(v) \cup N(u))\big)=0$ for all $uv\in E$, i.e. every $G-(N(v) \cup N(u))$ is the empty graph, or equivalently $N(v) \cup N(u)=V$.

The only edgeless graph from $\mathcal{H}$ is $K_1$. 
\end{proof}

\medskip
We use Lemma~\ref{av_1Instance} to obtain some bounds on $av_1$. For a graph $G=(V,E)$, define
\begin{align*}
\delta_1(G)=\min_{uv\in E}|N(v) \cup N(u)|\quad \text{and} \quad \delta_2(G)=\max_{uv\in E}|N(v) \cup N(u)|\,.
\end{align*}
It is clear that $2\leq \delta_1(G)\leq \delta_2(G)\leq |V|$ for all $G$ that are not edgless.

\begin{proposition}\label{prop:delta}
For every $n$-vertex graph $G\neq E_n$, 
\begin{align*}
2\leq \frac{3n+2-3\delta_2(G)}{n+1-\delta_2(G)} \leq av_1(G) \leq \frac{n+4- \delta_1(G)}{2} \leq \frac{n+ 2}{2}\,.
\end{align*}
\end{proposition}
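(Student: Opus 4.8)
The plan is to establish the four inequalities in the chain by working from the alternative formula for $av_1$ proved in Lemma~\ref{av_1Instance}, together with a good understanding of the elementary quantities $\sigma_0$ and $S_0$ on the residual graphs $H_{uv}=G-(N(v)\cup N(u))$. Recall from Lemma~\ref{av_1Instance} that $av_1(G)=2+\sum_{uv\in E}\alpha_{uv}\,av_0(H_{uv})$, a convex combination of the values $2+av_0(H_{uv})$. By the sandwich part of that lemma, there exist edges realizing a term below the average and a term above it, so it suffices to bound the individual quantities $2+av_0(H_{uv})$ in terms of the order of $H_{uv}$, which is controlled by $\delta_1(G)$ and $\delta_2(G)$.

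First I would record the extreme cases for $av_0$ on an $m$-vertex graph. On the one hand, $av_0(H)\ge 0$ always, with the minimum contribution coming from the densest possible $H_{uv}$, i.e. the one with the fewest vertices, which is why $\delta_2(G)$ (the maximum of $|N(v)\cup N(u)|$) governs the lower bound: the smallest residual graph has $n-\delta_2(G)$ vertices. On the other hand, I would use the known fact (from \cite{andriantiana2018}, essentially the statement that the edgeless graph maximizes $av_0$) that for an $m$-vertex graph $H$ one has $av_0(H)\le m/2$, with equality for $E_m$. Applying this with $m=n-\delta_1(G)$, since the largest residual graph $H_{uv}$ has at least $n-\delta_2(G)$ and at most $n-\delta_1(G)$ vertices, gives the upper estimate $2+av_0(H_{uv})\le 2+(n-\delta_1(G))/2=(n+4-\delta_1(G))/2$. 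Feeding the extremal edge furnished by Lemma~\ref{av_1Instance} into the convex-combination formula then yields $av_1(G)\le (n+4-\delta_1(G))/2$, and the final inequality $(n+4-\delta_1(G))/2\le (n+2)/2$ is immediate from $\delta_1(G)\ge 2$.

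For the lower bound I would likewise pick the edge $u_1v_1$ whose term lies at or below the average, so $av_1(G)\ge 2+av_0(H_{u_1v_1})$, and then bound $av_0$ from below on the smallest residual graph. The sharper lower bound $\frac{3n+2-3\delta_2(G)}{n+1-\delta_2(G)}$ suggests that the intended estimate is not merely $av_0\ge 0$ but a quantitative lower bound on $av_0(H)$ for an $m$-vertex graph $H$ with $m=n-\delta_2(G)$; writing the target as $2+\frac{n-\delta_2(G)}{n+1-\delta_2(G)}=2+\frac{m}{m+1}$, I expect to use the bound $av_0(H)\ge m/(m+1)$, which holds because any $m$-vertex graph has at most $2^m$ independent sets but at least $m$ singletons plus the empty set, forcing the average size to be at least roughly $m/(m+1)$ — more precisely this is the value attained by $K_m$, the minimizer of $av_0$. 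Substituting $m=n-\delta_2(G)$ reproduces the stated fraction. The verification that $2\le \frac{3n+2-3\delta_2(G)}{n+1-\delta_2(G)}$ is then a routine manipulation using $\delta_2(G)\le n$.

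The main obstacle I anticipate is pinning down the precise lower bound for $av_0$ on a general $m$-vertex graph and confirming that $K_m$ is the minimizer with value $m/(m+1)$, since that is what forces the nontrivial fraction on the left rather than the trivial bound $av_1\ge 2$; the upper side, by contrast, reduces cleanly to the maximality of the edgeless graph for $av_0$. A secondary subtlety is that the residual graph $H_{u_1v_1}$ attaining the lower bound may have more than $n-\delta_2(G)$ vertices, so I must ensure that the chosen lower bound for $av_0$ is genuinely monotone (or otherwise uniform) in the number of vertices, or else select the edge realizing $\delta_2(G)$ directly rather than the average-based edge; reconciling the convex-combination argument of Lemma~\ref{av_1Instance} with the vertex-count extremes of $\delta_1,\delta_2$ is where the careful bookkeeping will lie.
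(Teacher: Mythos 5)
Your proposal is correct and takes essentially the same route as the paper: both use the sandwich edges from Lemma~\ref{av_1Instance} together with the extremal bounds $\frac{m}{m+1}=av_0(K_m)\leq av_0(H)\leq av_0(E_m)=\frac{m}{2}$ from \cite{andriantiana2018}, and then pass from the actual orders of the residual graphs $H_{uv}$ to $\delta_1(G)$ and $\delta_2(G)$ by the monotonicity you flag as the needed bookkeeping (the paper phrases it as $x\mapsto(a-x)/(b-x)$ being decreasing for $a<b$). One small caveat: your parenthetical heuristic for $av_0(H)\geq m/(m+1)$ (singletons versus at most $2^m$ independent sets) is not a valid argument on its own, but this is harmless since you correctly anchor the bound on the cited fact that $K_m$ minimises $av_0$ among $m$-vertex graphs.
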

 
\begin{proof}
Using $u_1,v_1,u_2$ and $v_2$ as in Lemma~\ref{av_1Instance} and the result 
\begin{align*}
\frac{n}{n+1}=av_0(K_n)\leq av_0(H)\leq av_0(E_n)=\frac{n}{2}
\end{align*}
that holds for all $n$-vertex graph $H$ (see~\cite{andriantiana2018}), we have
\begin{align*}
2+ \frac{n-|N(v_1) \cup N(u_1)| }{n+1-|N(v_1) \cup N(u_1)| } \leq av_1(G) \leq 2+\frac{n-|N(v_2) \cup N(u_2)| }{2}\,.
\end{align*}
The function $x \mapsto (a-x)/(b-x)$ is decreasing in $x$, provided that $a<b$. It follows that
\begin{align*}
2+ \frac{n-\delta_2(G)}{n+1-\delta_2(G)} \leq av_1(G) \leq 2+\frac{n- \delta_1(G)}{2} \leq 2+\frac{n- 2}{2}\,.
\end{align*}
\end{proof}

\medskip
We now aim to obtain a sharp upper bound on $av_1$.

\begin{proposition}[\cite{andriantiana2018}]
\label{Pro:avo}
For every $n$-vertex graph $G$ that is not edgeless, we have
$$av_0(G) < av_0(E_n) = n/2.$$
\end{proposition}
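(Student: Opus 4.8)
The plan is to compute $av_0(G)$ via the same neighbourhood recursion that underlies all the earlier lemmas, and then exploit strict inequalities coming from the presence of at least one edge. Concretely, I would fix any vertex $v$ of positive degree (such a vertex exists because $G\neq E_n$) and apply the $0$-nearly analogue of Lemma~\ref{lem:g-v}, namely the standard decomposition $\sigma_0(G)=\sigma_0(G-v)+\sigma_0(G-N[v])$ together with its differentiated companion $S_0(G)=S_0(G-v)+\sigma_0(G-N[v])+S_0(G-N[v])$. Dividing, $av_0(G)$ is a convex combination
\begin{align*}
av_0(G)=\beta\,av_0(G-v)+(1-\beta)\bigl(1+av_0(G-N[v])\bigr),
\end{align*}
where $\beta=\sigma_0(G-v)/\sigma_0(G)$ and $1-\beta=\sigma_0(G-N[v])/\sigma_0(G)$. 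The quantity $1+av_0(G-N[v])$ records that every independent set counted on the $G-N[v]$ side already contains $v$.

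First I would set up an induction on $n$. The base cases are small graphs with an edge, where $av_0(G)<n/2$ can be checked directly (for instance $K_2$ gives $av_0=1/3<1$). For the inductive step, I would bound each term of the convex combination. On the $G-v$ side, $G-v$ has $n-1$ vertices, so by the trivial bound $av_0(G-v)\le (n-1)/2$ coming from the edgeless comparison. On the $G-N[v]$ side, $G-N[v]$ has at most $n-2$ vertices (since $v$ has a neighbour, $|N[v]|\ge 2$), so $1+av_0(G-N[v])\le 1+(n-2)/2=n/2$. Taking the convex combination then yields $av_0(G)\le n/2$; the work is in arranging that at least one of these bounds is strict so that the combined inequality is strict. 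The cleanest route is to pick $v$ to be an endpoint of an edge and observe that $G-N[v]$ has strictly fewer than $n-1$ vertices, which makes the second branch contribute strictly less than $n/2$ whenever its weight $1-\beta$ is positive; and $1-\beta>0$ precisely because $\sigma_0(G-N[v])\ge 1$ (the empty set is always independent). This secures a strict inequality.

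An alternative and perhaps slicker approach avoids induction entirely: use the identity $av_0(G)=\sum_{w\in V}\Pr[w\in I]$, where $I$ is a uniformly random independent set of $G$, so that $av_0(G)=\sum_{w}\sigma_0(G-N[w])/\sigma_0(G)$. For the edgeless graph each term equals $1/2$, giving $n/2$. For $G$ with an edge $uv$, I would argue that $\Pr[w\in I]\le 1/2$ for every $w$, with strict inequality for at least one vertex. The per-vertex bound $\Pr[w\in I]\le 1/2$ follows from the injection $I\mapsto I\setminus\{w\}$ from independent sets containing $w$ into those not containing $w$; strictness at an endpoint of an edge comes from the fact that this injection is not surjective, since $N[u]\setminus\{w\}$ is nonempty and its independent extensions are missed. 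Summing over $w$ gives the strict bound.

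The main obstacle in either route is ensuring the inequality is \emph{strict} rather than merely $\le$, since the non-strict version is immediate. In the inductive approach the subtlety is confirming that the weight on the strictly-smaller branch is genuinely positive (which holds because $\sigma_0$ of any graph is at least $1$), and in the probabilistic approach it is verifying that the injection fails to be onto at an edge endpoint. I would favour the probabilistic formulation as the cleaner writeup, but since this proposition is quoted from~\cite{andriantiana2018}, the authors presumably just cite it; my reconstruction above is how I would supply the argument if needed.
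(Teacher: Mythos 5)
The paper itself offers no proof of this proposition: it is imported verbatim from~\cite{andriantiana2018}, so there is no in-paper argument to compare against, and your reconstruction has to stand on its own. Your second (probabilistic) route does: the identity $av_0(G)=\sum_{w\in V}\sigma_0(G-N[w])/\sigma_0(G)$ is correct, the injection $I\mapsto I\setminus\{w\}$ gives $\Pr[w\in I]\le 1/2$ for every $w$, and for an endpoint $u$ of an edge $uv$ the injection misses every independent set containing $v$ (the image consists of sets avoiding $N(u)\ni v$, while $\{v\}$ itself is independent and omits $u$), so $\Pr[u\in I]<1/2$ strictly and summing gives $av_0(G)<n/2$, with equality $av_0(E_n)=n/2$ coming from each term being exactly $1/2$. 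Since you say you would favour this formulation for the writeup, the proposal as a whole is acceptable.

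Your inductive route, however, attributes strictness to the wrong branch. You claim that because $G-N[v]$ has at most $n-2$ vertices, the second branch contributes \emph{strictly} less than $n/2$; but the bound is $1+av_0(G-N[v])\le 1+(n-2)/2=n/2$, with equality genuinely attained --- take $G=K_2\cup E_{n-2}$ and $v$ an endpoint of the edge, so that $G-N[v]=E_{n-2}$ and the second branch equals $n/2$ exactly. So positivity of $1-\beta$ buys you nothing there. The repair is immediate and sits in the branch you did not use: $G-v$ has $n-1$ vertices, so $av_0(G-v)\le (n-1)/2<n/2$ by the (non-strict) inductive hypothesis, and $\beta=\sigma_0(G-v)/\sigma_0(G)>0$ always, whence
\begin{align*}
av_0(G)\le \beta\,\frac{n-1}{2}+(1-\beta)\,\frac{n}{2}=\frac{n}{2}-\frac{\beta}{2}<\frac{n}{2}\,.
\end{align*}
(Note also that the inductive hypothesis must be phrased non-strictly for all graphs, $av_0(H)\le |V(H)|/2$, since $G-v$ and $G-N[v]$ may be edgeless.) One further small slip: $av_0(K_2)=2/3$, not $1/3$, since $S_0(K_2)=2$ and $\sigma_0(K_2)=3$; this does not affect anything, as $2/3<1$ still.
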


\medskip
Let $av_1(G,e)$ denote the average of all $1$-nearly independent subsets of $G$ that contain the endvertices of edge $e$. For $e=uv$, we have
$$
av_1(G,e)=2+ av_0\big(G-(N(u) \cup N(v)) \big)\,.
$$ 
Moreover, 
\begin{align*}
av_1(G)=2+ \sum_{uv \in E} \alpha_{uv} \cdot av_0 \big( G-(N(v) \cup N(u)) \big)
\end{align*}
for certain $\alpha_{uv}>0$ such that $\sum_{uv \in E} \alpha_{uv}= 1$, see the proof of Lemma~\ref{av_1Instance}. Since $av_0(E_m)$ increases with $m$, the above identities imply that
$$
av_1(G)\leq \max_{e\in E(G)} av_1(G,e) \leq 2+ av_0(E_{n-2})
$$ holds for every graph $G$ with $n$ vertices.

\medskip
Define $G_n:=K_2\cup (n-2)K_1$ for $n\geq 6$. This graph $G_n$ contains only one edge, thus
$$
av_1(G_n)=2+ av_0(E_{n-2})\,.
$$
Hence, our next theorem follows:

\begin{theorem}
\label{Th:GenMax}
Let $n\geq 6$. For every $n$-vertex graph $G$, 
\begin{align*}
av_1(G)\leq \frac{n}{2}+1\,.
\end{align*}
Equality holds if and only if $G=G_n$.
\end{theorem}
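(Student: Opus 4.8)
The upper bound is essentially already in hand from the discussion preceding the statement, so the first thing I would record is that the inequality is immediate. Combining the representation $av_1(G)=2+\sum_{uv\in E}\alpha_{uv}\,av_0(H_{uv})$ from Lemma~\ref{av_1Instance} (with $H_{uv}=G-(N(v)\cup N(u))$, all $\alpha_{uv}>0$, and $\sum_{uv\in E}\alpha_{uv}=1$) with the observation that each $H_{uv}$ has at most $n-2$ vertices — since $u,v\in N(v)\cup N(u)$ are both removed — yields $av_1(G)\le 2+av_0(E_{n-2})=n/2+1$. I would also note at once that $G_n$, whose unique edge has two degree-$1$ endpoints, satisfies $H_{uv}=E_{n-2}$ for that edge and hence attains $av_1(G_n)=2+av_0(E_{n-2})=n/2+1$, so equality is realised.

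The real content is uniqueness, and the plan is to trace the equality back through the chain. Since every weight $\alpha_{uv}$ is strictly positive and each summand obeys $av_0(H_{uv})\le (n-2)/2$, the equality $av_1(G)=n/2+1$ forces $av_0(H_{uv})=(n-2)/2$ simultaneously for \emph{every} edge $uv$ of $G$. To extract structure from a single such equality I would use the two-step estimate $av_0(H_{uv})\le |V(H_{uv})|/2\le (n-2)/2$, in which the first inequality is Proposition~\ref{Pro:avo} — strict unless $H_{uv}$ is edgeless — and the second follows from $av_0(E_m)=m/2$ being strictly increasing in $m$ — strict unless $|V(H_{uv})|=n-2$. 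Consequently $av_0(H_{uv})=(n-2)/2$ pins down $H_{uv}=E_{n-2}$ for each edge.

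It then remains to translate $H_{uv}=E_{n-2}$ into a graph structure. Having exactly $n-2$ vertices forces $|N(v)\cup N(u)|=2$, that is $N(v)\cup N(u)=\{u,v\}$, so both $u$ and $v$ have degree $1$ and are adjacent only to one another; and $H_{uv}$ being edgeless means $G-\{u,v\}$ contains no edges. But if $G-\{u,v\}$ is edgeless while $u$ and $v$ have degree $1$, then $uv$ is the only edge of $G$, whence $G=K_2\cup(n-2)K_1=G_n$. These conditions hold for every edge, and a single edge already determines the whole graph, so $G$ is uniquely $G_n$.

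The main obstacle is not any individual step but making the equality discussion airtight: one must separately exclude both sources of slack in $av_0(H_{uv})\le (n-2)/2$ — a deficiency in the vertex count of $H_{uv}$ and the presence of any edge in $H_{uv}$ — and then argue that the resulting purely local condition at one edge already forces the global structure of $G$. The strict monotonicity of $av_0(E_m)$ together with the strictness in Proposition~\ref{Pro:avo} are precisely what close these two gaps.
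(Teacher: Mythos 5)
Your proof is correct and follows essentially the same route as the paper: the weighted-average representation from Lemma~\ref{av_1Instance} together with $av_0(H_{uv})\le av_0(E_{n-2})=(n-2)/2$ yields the upper bound, and the single-edge graph $G_n$ attains it. Your uniqueness argument---forcing $H_{uv}=E_{n-2}$ for every edge via the strictness in Proposition~\ref{Pro:avo} and the strict monotonicity of $av_0(E_m)$, and then reading off $N(u)\cup N(v)=\{u,v\}$ with $G-\{u,v\}$ edgeless---correctly fills in the equality analysis that the paper leaves implicit in the discussion preceding the theorem.
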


\medskip
We now shift our focus to the family of $n$-vertex trees.

\section{Extremal $n$-vertex trees}\label{sect:tree}

As a particular case of Theorem~\ref{Thm:ngraph}, the star $S_n$ uniquely minimises $av_1$ among $n$-vertex trees. The next task is to search for $n$-vertex trees with the maximum average size of $1$-nearly independent vertex sets. 

We call an internal vertex in a tree $T$ and vertex with degre greater than $1$.

\begin{proposition}
For any $n$-vertex tree $T$ with minimum degree  of internal vertices equal to $\delta'$, we have
$$
av_1(T)\leq 2+\frac{n-\delta'-1}{2}\,.
$$
\end{proposition}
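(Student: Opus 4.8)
The plan is to bound $av_1(T)$ by controlling the term $av_0\big(T-(N(u)\cup N(v))\big)$ that appears in Lemma~\ref{av_1Instance}. Recall from the discussion preceding Theorem~\ref{Th:GenMax} that
\begin{align*}
av_1(T)\leq \max_{uv\in E(T)}\Big(2+av_0\big(T-(N(u)\cup N(v))\big)\Big),
\end{align*}
so it suffices to show that for every edge $uv$ of $T$ the graph $H_{uv}:=T-(N(u)\cup N(v))$ has at most $n-\delta'-1$ vertices. Then Proposition~\ref{Pro:avo} applied to $H_{uv}$ (or the trivial bound $av_0(E_m)=m/2$ when $H_{uv}$ is edgeless) gives $av_0(H_{uv})<(n-\delta'-1)/2$, and hence the claimed inequality.

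First I would estimate $|N(u)\cup N(v)|$ from below for an arbitrary edge $uv$. Since $uv$ is an edge of a tree, at least one of $u,v$ is internal; say $\deg(u)\geq 2$. By inclusion--exclusion, $|N(u)\cup N(v)|=\deg(u)+\deg(v)-|N(u)\cap N(v)|$. Because $T$ is a tree it is triangle-free, so $u$ and $v$ have no common neighbour other than possibly each other; more precisely $N(u)\cap N(v)=\emptyset$ since a common neighbour together with the edge $uv$ would close a triangle. Thus $|N(u)\cup N(v)|=\deg(u)+\deg(v)$. Now I would split into cases according to whether $v$ is also internal. If $v$ is internal, then $\deg(u),\deg(v)\geq\delta'$, giving $|N(u)\cup N(v)|\geq 2\delta'\geq \delta'+1$ (using $\delta'\geq 1$). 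If $v$ is a leaf, then $\deg(v)=1$ and $\deg(u)\geq\delta'$, so again $|N(u)\cup N(v)|\geq\delta'+1$. In both cases $|N(u)\cup N(v)|\geq\delta'+1$, hence $H_{uv}$ has at most $n-(\delta'+1)=n-\delta'-1$ vertices.

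Combining these, for every edge $uv$ we get $av_0(H_{uv})\leq (n-\delta'-1)/2$, with the convention that this also covers the edgeless case, so
\begin{align*}
av_1(T)\leq 2+\frac{n-\delta'-1}{2},
\end{align*}
as required. The main obstacle is the careful lower bound on $|N(u)\cup N(v)|$: one must verify that every edge of a tree has an internal endpoint and that the two endpoints of an edge share no neighbour, so that the degrees add cleanly. Both facts follow from $T$ being a tree (connected, acyclic, in particular triangle-free), but they deserve explicit mention since the bound is tight exactly when some edge $uv$ has $u$ of degree $\delta'$ and $v$ a leaf whose removal leaves an edgeless remainder. One should also note the edge case $\delta'$ undefined (a tree with no internal vertex, i.e. $T=K_2$ or smaller) is excluded by $n$ being large enough for $S_n$-type extremal considerations, but if desired one can treat $n\leq 2$ separately.
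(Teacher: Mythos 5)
Your proof is correct and follows essentially the same route as the paper: both rest on the identity $av_1(T)=2+\sum_{uv\in E(T)}\alpha_{uv}\,av_0\big(T-(N(u)\cup N(v))\big)$ from Lemma~\ref{av_1Instance} together with Proposition~\ref{Pro:avo}. Your only addition is to make explicit the step the paper leaves implicit, namely that $|N(u)\cup N(v)|=\deg(u)+\deg(v)\geq \delta'+1$ for every edge $uv$ (using triangle-freeness and the fact that every edge of a tree on at least $3$ vertices has an internal endpoint), which is a worthwhile clarification rather than a different argument.
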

\begin{proof}
As already seen in the proof of Lemma \ref{av_1Instance},
$$
av_1(T)=2+ \sum_{uv \in E(T)} \alpha_{uv} \cdot av_0\big( T-(N(u) \cup N(v)) \big)
$$
for some positive numbers $\alpha_{uv}$ that sum to $1$. If $T$ has $n$ vertices and minimum internal degree $\delta'$, then by Proposition \ref{Pro:avo},
$$av_0\big( T-(N(u) \cup N(v)) \big) \leq \frac{\max\{n-\delta'-1,0\}}{2}$$ for every $uv \in E(T)$. 
\end{proof}

\medskip
We can then deduce the following upper bound for $n$-vertex trees, obtained for $\delta'\in \{1,2\}$. The case of equality for $T=P_i$ with $2\leq i\leq 4$, can be checked with easy calculations.
\begin{corollary}
For any $n$-vertex tree $T$, we have
$$
av_1(T)\leq 2+\frac{\max\{n-3,0\}}{2}
$$
with equality holding for $T\in \{P_2, P_3, P_4\}$.
\end{corollary}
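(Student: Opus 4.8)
The plan is to derive this corollary directly from the preceding proposition by choosing appropriate values of $\delta'$ and checking the boundary cases by hand. The proposition gives, for any $n$-vertex tree $T$ with minimum internal degree $\delta'$, the bound $av_1(T)\le 2+\tfrac{n-\delta'-1}{2}$ (read with the $\max\{\cdot,0\}$ convention from its proof). Since the right-hand side decreases as $\delta'$ increases, the weakest possible constraint comes from the smallest admissible $\delta'$. For a tree on at least three vertices, every internal vertex has degree at least $2$, so $\delta'\ge 2$ whenever $T$ has an internal vertex; substituting $\delta'=2$ yields $av_1(T)\le 2+\tfrac{\max\{n-3,0\}}{2}$, which is exactly the claimed inequality.

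First I would separate out the small or degenerate cases, since the proposition's hypothesis tacitly requires $T$ to possess an internal vertex (otherwise $\delta'$ is undefined). The only $n$-vertex trees with no internal vertex are $P_1$ (a single vertex, edgeless, $av_1=0$) and $P_2$ (a single edge), so these must be handled separately. For $P_1$ and $P_2$ the inequality reads $av_1(T)\le 2$, and both satisfy it: $av_1(P_1)=0$ and, since $P_2=K_2$ is a good graph, $av_1(P_2)=2$ by Theorem~\ref{Thm:ngraph}. For every tree with $n\ge 3$ there is at least one internal vertex, so the proposition applies with $\delta'\ge 2$, and the argument of the previous paragraph closes the general bound.

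Next I would verify the equality cases. For $P_2$ we already have $av_1(P_2)=2=2+\tfrac{\max\{2-3,0\}}{2}$, matching the bound. For $P_3$ and $P_4$ I would compute $av_1$ explicitly using Lemma~\ref{av1Exp}: summing over the edges $uv$ the quantities $\sigma_0(P-(N(u)\cup N(v)))$ and the associated $av_0$ values. For $P_3$ one finds $av_1(P_3)=2$, agreeing with $2+\tfrac{\max\{3-3,0\}}{2}=2$; for $P_4$ a short calculation gives $av_1(P_4)=2+\tfrac12=\tfrac52$, matching $2+\tfrac{\max\{4-3,0\}}{2}$. These are the routine verifications the corollary statement already flags as checkable by hand.

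The main subtlety, rather than any genuine obstacle, is bookkeeping around the $\max\{\cdot,0\}$ truncation and the implicit hypothesis that an internal vertex exists. One must be careful that substituting $\delta'=2$ into the proposition is legitimate only for trees that actually have an internal vertex, and that $n-\delta'-1$ can be negative for small $n$, which is precisely why the $\max$ appears. Once the degenerate cases $P_1,P_2$ are dispatched and the truncation is tracked honestly, the deduction is immediate and the equality cases follow from the three finite computations above.
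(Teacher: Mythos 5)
Your overall route is exactly the paper's: deduce the bound from the preceding proposition by taking the smallest admissible value of $\delta'$ (the paper phrases this as ``obtained for $\delta'\in\{1,2\}$'') and dispatch the equality cases by direct computation. Your treatment of the degenerate trees $P_1$ and $P_2$, which have no internal vertex, is in fact more careful than the paper's one-line deduction, and your proof of the inequality itself is sound: every tree on $n\geq 3$ vertices has an internal vertex, so $\delta'\geq 2$, and the right-hand side of the proposition is decreasing in $\delta'$.

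However, your verification of the equality case $P_4$ fails, and this is a genuine error --- one which the paper's own statement shares, so you have in effect confirmed a false claim rather than checked it. Writing $P_4=v_1v_2v_3v_4$, the $1$-nearly independent vertex sets are $\{v_1,v_2\}$, $\{v_2,v_3\}$, $\{v_3,v_4\}$, $\{v_1,v_2,v_4\}$ and $\{v_1,v_3,v_4\}$, so $\sigma_1(P_4)=5$ and $S_1(P_4)=2+2+2+3+3=12$, giving
\begin{align*}
av_1(P_4)=\frac{12}{5}<\frac{5}{2}=2+\frac{\max\{4-3,0\}}{2}\,.
\end{align*}
The same value follows from Lemma~\ref{av1Exp}: the edges $v_1v_2$, $v_2v_3$, $v_3v_4$ leave remainders $E_1$, $\emptyset$, $E_1$ with $\sigma_0$-values $2,1,2$ and $S_0$-values $1,0,1$, whence $av_1(P_4)=2+\tfrac{2}{5}$. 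So the ``short calculation'' you cite as giving $av_1(P_4)=\tfrac52$ cannot be carried out; the inequality is strict for $P_4$, and equality in the corollary holds only for $P_2$ and $P_3$ (your computations $av_1(P_2)=av_1(P_3)=2$ are correct). This does not affect the validity of the upper bound, but the equality clause for $P_4$, as asserted both in your write-up and in the corollary itself, is false.
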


\medskip
We will prove a matching lower bound on the maximum average size of $1$-nearly independent vertex sets in trees.

For $n\geq 4$, define $R_n$ to be the tree obtained by subdividing once an edge of the star $S_{n-1}$, i.e. by adding an edge between a leaf of $S_{n-1}$ and a new vertex. 

Let $v$ be a leaf of $R_n$ such that $v$ is adjacent to a vertex $u$ of degree $2$. With this choice of vertex $v$, we apply Lemma~\ref{lem:g-v} alongside Proposition~\ref{Prop:Few} to obtain

\begin{align*}
av_1(R_n) &=\frac{S_1(S_{n-1})+S_1(S_{n-2}) +\sigma_1(S_{n-2}) + S_0(E_{n-3})+2\sigma_0(E_{n-3})}{\sigma_1(S_{n-1})+\sigma_1(S_{n-2})+\sigma_0(E_{n-3})}\nonumber\\
&=\frac{2(n-2)+2(n-3) + (n-3) + (n-3)2^{n-4}+2\times2^{n-3}}{(n-2)+(n-3)+2^{n-3}}\,,
\end{align*}
where in the last step we used the formulas
\begin{align*}
\sigma_0(E_m)=2^m\,, \quad  S_0(E_m)=m 2^{m-1}\,.
\end{align*}
Thus
\begin{align*}
av_1(R_n) &=\frac{ 5n-13+ (n+1)2^{n-4}}{2n-5+2^{n-3}}\nonumber\\
&=\frac{n}{2}+\frac{2^{n-4}- n^2+15n/2-13}{2n-5+2^{n-3}}\nonumber\,,
\end{align*}
and we also have
\begin{align}
av_1(R_n)&=\frac{ 5n-13+ (n+1)2^{n-4}}{2n-5+2^{n-3}}\nonumber\\
&= \frac{n+1}{2} - \frac{n^2 -13n/2 +21/2}{2n-5+2^{n-3}} <\frac{n+1}{2}\,.
\label{Eq:n1}
\end{align} 



Note that 
$$
\lim_{n\to \infty}av_1(R_n)-\frac{n+1}{2} =0
$$ 
and that
\begin{align}
av_1(R_n) &=\frac{n}{2}+\frac{2^{n-4}- n^2+15n/2-13}{2n-5+2^{n-3}}\nonumber > \frac{n}{2}\,,
\label{Eq:Minfn}
\end{align}
for all $n$, except $n\in \{6,7,8\}$.

\begin{theorem}
Let $n>8$. Among $n$-vertex trees, the maximum average size of $1$-nearly independent vertex sets is given by $n/2 + \delta_n$ for some function $0< \delta_n <1/2$. Moreover, both the upper and lower bounds are asymptotically sharp.
\end{theorem}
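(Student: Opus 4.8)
The plan is to control the maximum $M_n:=\max\{av_1(T): T\ \text{an}\ n\text{-vertex tree}\}$ by a separate upper and lower estimate, and then to read off the asymptotics from the family $R_n$. For the upper estimate I would start from the convex-combination identity
\[
av_1(T)=2+\sum_{uv\in E(T)}\alpha_{uv}\,av_0\big(T-(N(u)\cup N(v))\big),\qquad \alpha_{uv}>0,\ \sum_{uv\in E(T)}\alpha_{uv}=1,
\]
obtained in the proof of Lemma~\ref{av_1Instance}. Writing $H_{uv}:=T-(N(u)\cup N(v))$ and using that a tree is triangle-free, an edge $uv$ has $N(u)\cap N(v)=\emptyset$, so $|N(u)\cup N(v)|=\deg u+\deg v$ and $|V(H_{uv})|=n-\deg u-\deg v$. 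Since $T$ is connected with $n\ge 3$, no edge joins two leaves, so $\deg u+\deg v\ge 3$ and $|V(H_{uv})|\le n-3$ for every edge. Proposition~\ref{Pro:avo} then yields $av_0(H_{uv})\le |V(H_{uv})|/2\le (n-3)/2$, with equality only when $\deg u+\deg v=3$ and $H_{uv}$ is edgeless.

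The decisive step is the equality analysis. As the weights $\alpha_{uv}$ are strictly positive and sum to $1$, the convex combination can equal its maximal possible value $(n-3)/2$ only if $av_0(H_{uv})=(n-3)/2$ for \emph{every} edge, which in particular forces every edge to have degree sum exactly $3$. This makes $T$ bipartite with one side consisting of leaves and the other of degree-$2$ vertices, and the only connected tree of this type is $P_3$; this contradicts $n\ge 4$. Hence the estimate is strict, $av_1(T)<2+(n-3)/2=(n+1)/2$, for every $n$-vertex tree, and maximising over the finitely many such trees gives $M_n<(n+1)/2$, i.e. $\delta_n<1/2$.

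For the lower bound and the asymptotics I would invoke $R_n$. Being an $n$-vertex tree, $R_n$ gives $M_n\ge av_1(R_n)$, and by~\eqref{Eq:Minfn} the numerator $2^{n-4}-n^2+15n/2-13$ is positive for $n>8$, so $av_1(R_n)>n/2$ and hence $M_n>n/2$, i.e. $\delta_n>0$. Finally, \eqref{Eq:n1} shows $(n+1)/2-av_1(R_n)=(n^2-13n/2+21/2)/(2n-5+2^{n-3})\to 0$; combined with $av_1(R_n)\le M_n<(n+1)/2$ this squeezes $M_n$ so that $(n+1)/2-M_n\to 0$, whence $\delta_n\to 1/2$. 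Thus the upper bound $(n+1)/2$ is approached by the trees $R_n$ and so cannot be lowered asymptotically, while $M_n>n/2$ keeps $n/2$ as the exact leading term, so both bounds are asymptotically sharp.

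The main obstacle is the strict upper bound: the non-strict estimate $av_1(T)\le (n+1)/2$ is routine, and the genuine work lies in the equality discussion that excludes $(n+1)/2$ for $n\ge 4$ by reducing the extremal configuration to $P_3$. Once this is in place, the remaining steps are substitutions into the closed forms already computed for $R_n$ together with an elementary polynomial-over-exponential limit.
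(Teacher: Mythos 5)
Your proposal is correct and follows essentially the same route as the paper: the convex-combination identity from the proof of Lemma~\ref{av_1Instance} together with Proposition~\ref{Pro:avo} for the upper bound, and the closed-form computations \eqref{Eq:Minfn} and \eqref{Eq:n1} for the family $R_n$ for the lower bound and the asymptotics. The one place where you go beyond the paper is the equality analysis behind $\delta_n<1/2$: the paper only records the non-strict estimate $av_1(T)\le 2+\max\{n-3,0\}/2$ (its corollary asserts equality for $P_2,P_3,P_4$ but never proves that no larger tree attains the bound), whereas the strict inequality in the theorem does require ruling equality out for $n>8$. Your argument closes exactly this gap, and it is sound: positive weights force $av_0\big(T-(N(u)\cup N(v))\big)=(n-3)/2$ for every edge, which by triangle-freeness (so $|N(u)\cup N(v)|=\deg u+\deg v$) forces degree sum exactly $3$ on every edge, hence every vertex has degree $1$ or $2$, hence $T$ is a path, and any path with $n\ge 4$ contains an edge of degree sum $4$ --- a contradiction, so $M_n<(n+1)/2$. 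The remaining steps (positivity of $2^{n-4}-n^2+15n/2-13$ for $n>8$ giving $\delta_n>0$, and the squeeze $av_1(R_n)\le M_n<(n+1)/2$ giving $\delta_n\to 1/2$) match the paper's computations, and your reading of ``asymptotically sharp'' for the lower bound (that $n/2$ is the exact leading term) is the same loose interpretation the paper itself uses.
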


\medskip
At this stage, it is reasonable to make the following conjecture.

\begin{conjecture}
The tree $R_n$ reaches the maximum value of $av_1$ over the set of all $n$-vertex trees.
\end{conjecture}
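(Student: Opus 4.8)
The plan is to recast the statement as the minimization of a weighted average of nonnegative ``defects'' and to show that $R_n$ is the unique tree that drives this average to its least value. Write $H_{uv}:=T-(N(u)\cup N(v))$ for an edge $uv$ of a tree $T$. By Lemma~\ref{av1Exp} and Lemma~\ref{av_1Instance}, together with $av_0(H)\le |V(H)|/2$ from Proposition~\ref{Pro:avo} and the fact that in a tree on $n\ge 3$ vertices no edge has two leaf-endpoints, so $|V(H_{uv})|=n-\deg(u)-\deg(v)\le n-3$ (there are no common neighbours), one gets
\[
\frac{n+1}{2}-av_1(T)=\frac{\displaystyle\sum_{uv\in E(T)}\sigma_0(H_{uv})\,g_T(uv)}{\displaystyle\sum_{uv\in E(T)}\sigma_0(H_{uv})},\qquad g_T(uv):=\frac{n-3}{2}-av_0(H_{uv})\ge 0.
\]
Thus the conjecture is equivalent to showing that the weighted average $\sum_{uv}\alpha_{uv}\,g_T(uv)$, with weights $\alpha_{uv}=\sigma_0(H_{uv})/\sigma_1(T)$ as in the proof of Lemma~\ref{av_1Instance}, is minimized by $T=R_n$.

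First I would pin down exactly when a single term vanishes. Since $g_T(uv)\ge \tfrac{1}{2}\big((n-3)-|V(H_{uv})|\big)\ge 0$, equality $g_T(uv)=0$ forces both $\deg(u)+\deg(v)=3$ and $H_{uv}$ edgeless. Taking $v$ to be the leaf, $u$ its degree-$2$ neighbour and $w$ the second neighbour of $u$, the graph $H_{uv}=T-\{u,v,w\}$ is edgeless precisely when every edge of $T$ is incident to $\{u,v,w\}$; as $u,v$ account only for the edges $uv,uw$, the vertex $w$ must be adjacent to all remaining $n-3$ vertices, whence $T=R_n$. So $R_n$ is the \emph{unique} $n$-vertex tree possessing a zero-defect edge, and for that edge $\sigma_0(H_{uv})=\sigma_0(E_{n-3})=2^{n-3}$ is exponentially large. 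This is the structural heart of the matter: $R_n$ concentrates almost all of the mass $\sigma_1$ onto one edge of defect $0$, so its weighted-average defect is only $\Theta(n^2/2^n)$, matching $\tfrac{n+1}{2}-av_1(R_n)=\frac{(n-3)(2n-7)/2}{2^{n-3}+2n-5}$ read off from~\eqref{Eq:n1}.

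For a tree $T\ne R_n$ every edge has $g_T(uv)>0$, and the task is to show the weighted average cannot reach $R_n$'s value. Here one genuinely useful ingredient is available: for $T\ne R_n$, either every edge satisfies $|V(H_{uv})|\le n-4$, whence $\sigma_0(H_{uv})\le 2^{n-4}$, or some edge is a pendant path of length $2$ whose $H_{uv}$ is a graph on $n-3$ vertices containing at least one edge, whence $\sigma_0(H_{uv})\le 3\cdot 2^{n-5}$; in all cases $\max_{uv}\sigma_0(H_{uv})\le \tfrac34\,2^{n-3}$, a constant factor below the weight of $R_n$'s dominant edge. The proposed argument then balances this weight bound against a positive lower bound for $g_T(uv)$ on the heavy edges and against the total mass $\sigma_1(T)$, to conclude $\sum_{uv}\alpha_{uv}g_T(uv)\ge \tfrac{n+1}{2}-av_1(R_n)$; one would likely also want the auxiliary fact that $R_n$ maximizes $\sigma_1$ among $n$-vertex trees to control the denominator.

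The hard part is precisely this uniform comparison over all trees, and it is what keeps the statement a conjecture. I see two routes, each with a real obstacle. The inductive route deletes a leaf $v$ at the end of a longest path and uses Lemma~\ref{lem:g-v} to express $S_1(T),\sigma_1(T)$ through invariants of smaller trees, aiming to prove $S_1(T)\sigma_1(R_n)\le S_1(R_n)\sigma_1(T)$ by induction; the difficulty is that $av_1=S_1/\sigma_1$ is a ratio for which leaf-deletion is not monotone, so the induction hypothesis must be strengthened to a joint, two-sided statement about the pair $(\sigma_1,S_1)$ that is stable under the recursion. The compression route exhibits a local move—sliding a pendant edge, or contracting a long arm onto the dominant vertex—and shows it never decreases $av_1$, with $R_n$ as the unique fixed point; here the obstacle is signing the change in $av_1$, since the exponential weights $\sigma_0(H_{uv})$ of the affected edges shift in a delicate way. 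Either way, the crux is to prove that concentrating the mass of $\sigma_1$ onto a single zero-defect edge is always strictly better than spreading it over several positive-defect edges, and establishing that optimality is the step I expect to demand the most technical effort.
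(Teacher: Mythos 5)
The statement you were asked about is a \emph{conjecture} in the paper: the authors offer no proof, and neither does your proposal --- as you yourself concede in the final paragraph. So the verdict must be that there is a genuine gap, namely the entire comparison step. What you do establish is correct and worthwhile: the identity $\tfrac{n+1}{2}-av_1(T)=\sum_{uv}\alpha_{uv}\,g_T(uv)$ with $g_T(uv)=\tfrac{n-3}{2}-av_0(H_{uv})\ge 0$ follows soundly from Lemma~\ref{av_1Instance} and Proposition~\ref{Pro:avo} (using that in a tree on $n\ge 3$ vertices $\deg(u)+\deg(v)\ge 3$ for every edge and endpoints share no neighbour); your characterisation that $g_T(uv)=0$ forces $T=R_n$ is a clean and correct uniqueness lemma; and your checks against \eqref{Eq:n1}, including $n^2-\tfrac{13n}{2}+\tfrac{21}{2}=\tfrac{(n-3)(2n-7)}{2}$, are accurate. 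But the conjecture asserts a \emph{uniform} inequality over all $n$-vertex trees, and the one quantitative ingredient you supply --- $\max_{uv}\sigma_0(H_{uv})\le \tfrac34\cdot 2^{n-3}$ for $T\ne R_n$ --- does not by itself bound $\sum_{uv}\alpha_{uv}g_T(uv)$ from below by $R_n$'s exponentially small gap $\Theta\bigl(n^2 2^{-n}\bigr)$: a priori a tree could have many medium-weight edges each with tiny positive defect, and ruling this out requires exactly the defect-versus-weight tradeoff (edges with small $g_T(uv)$ force local structure close to $R_n$) that neither your inductive route nor your compression route carries out.

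One concrete error to flag in the sketch: the ``auxiliary fact that $R_n$ maximizes $\sigma_1$ among $n$-vertex trees'' is false. Take the spider with centre $c$, two pendant paths $c\,m_1\,l_1$ and $c\,m_2\,l_2$ of length $2$, and $n-5$ leaves at $c$. By Lemma~\ref{av1Exp}, each outer edge $m_il_i$ contributes $\sigma_0\bigl(P_2\cup E_{n-5}\bigr)=3\cdot 2^{n-5}$, so
\begin{align*}
\sigma_1(T)=6\cdot 2^{n-5}+4(n-5)+4=\tfrac{3}{2}\cdot 2^{n-3}+4n-16>2^{n-3}+2n-5=\sigma_1(R_n)
\end{align*}
for large $n$. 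This does not threaten the conjecture (that spider's weighted average defect is about $\tfrac13$, far above $R_n$'s), but it shows the denominator $\sigma_1(T)$ cannot be controlled the way you hoped, and any completed argument must handle trees whose $1$-nearly independent mass exceeds $R_n$'s while being spread over several positive-defect edges. Until that comparison is proved, the statement rightly remains open.
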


\medskip
We conclude this work with further bounds relating $\sigma_0$ and $\sigma_1$.

\section{Further bounds} \label{Sect:Furth}

In this short section, we establish further identities and bounds. In particular, we show that it is possible to have $\sigma_0 \leq 3 \sigma_1$. We begin with a lemma.

\begin{lemma}\label{lem:help}
For every graph $G=(V,E)$ that is not edgeless and every $uv \in E$, we have 
\begin{align*}
\frac{1}{2^{l_{uv}-2}+2^{l_{uv}-m_u}+2^{l_{uv}-m_v}+1}\leq \frac{\sigma_0(G-(N[v]\cup N[u]))}{\sigma_0(G)}\leq 1- \frac{\sigma_0(G-v)}{\sigma_0(G)}\,,
\end{align*}
where
\begin{align*}
l_{uv}=|N(v)\cup N(u)|\,, \quad m_u=|N[u]|\,, \quad m_v=| N[v]|\,.
\end{align*}
In particular, 
\begin{align*}
\frac{\sigma_0(G-(N[v]\cup N[u])}{\sigma_0(G)}\geq \frac{1}{(3\cdot 2^{l_{uv}-2}+1)}\,.
\end{align*}

\end{lemma}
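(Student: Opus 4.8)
The plan is to handle the two inequalities separately and then read off the ``in particular'' statement as a quick corollary. For the upper bound, I would invoke the standard deletion recursion $\sigma_0(G)=\sigma_0(G-v)+\sigma_0(G-N[v])$, which comes from partitioning the independent sets of $G$ according to whether or not they contain $v$. This rewrites the right-hand side as $1-\sigma_0(G-v)/\sigma_0(G)=\sigma_0(G-N[v])/\sigma_0(G)$, so it suffices to prove $\sigma_0(G-(N[v]\cup N[u]))\le\sigma_0(G-N[v])$. Since $uv\in E$ gives $N[u]\subseteq N(v)\cup N(u)=N[v]\cup N[u]$, the graph $G-(N[v]\cup N[u])$ is an induced subgraph of $G-N[v]$, and $\sigma_0$ is monotone under vertex deletion because every independent set of an induced subgraph is also an independent set of the ambient graph. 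This settles the upper bound.

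For the lower bound, write $W:=N[v]\cup N[u]$, so that $|W|=l_{uv}$, and $H:=G-W$. The key step is the product estimate $\sigma_0(G)\le \sigma_0(G[W])\cdot\sigma_0(H)$, where $G[W]$ denotes the subgraph induced by $W$; this follows by sending each independent set $I$ of $G$ to the pair $(I\cap W,\,I\cap V(H))$, a map that is injective into the product of the independent sets of $G[W]$ with those of $H$. Dividing through yields $\sigma_0(H)/\sigma_0(G)\ge 1/\sigma_0(G[W])$, so it remains to bound $\sigma_0(G[W])$ from above by the claimed denominator.

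To bound $\sigma_0(G[W])$ I would split the independent sets $J$ of $G[W]$ according to their interaction with the edge $uv$. Because $uv\in E$, no such $J$ contains both $u$ and $v$, leaving three cases. If $u,v\notin J$ then $J\subseteq W\setminus\{u,v\}$, contributing at most $2^{l_{uv}-2}$ sets. If $u\in J$ then $J\setminus\{u\}\subseteq W\setminus N[u]$ (and $v$ is then automatically excluded), contributing at most $2^{l_{uv}-m_u}$ sets, and symmetrically at most $2^{l_{uv}-m_v}$ sets when $v\in J$. Summing gives $\sigma_0(G[W])\le 2^{l_{uv}-2}+2^{l_{uv}-m_u}+2^{l_{uv}-m_v}$, which is in fact slightly stronger than the stated denominator (the extra $+1$ is harmless slack); combining with the previous paragraph delivers the lower bound.

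Finally, the ``in particular'' inequality is immediate: since $u$ and $v$ are adjacent we have $m_u=|N[u]|\ge 2$ and $m_v\ge 2$, hence $2^{l_{uv}-m_u},\,2^{l_{uv}-m_v}\le 2^{l_{uv}-2}$, so the denominator is at most $3\cdot 2^{l_{uv}-2}+1$ and the ratio is at least $1/(3\cdot2^{l_{uv}-2}+1)$. I expect the only genuinely delicate point to be the casework bounding $\sigma_0(G[W])$---specifically, using the containments $N[u],N[v]\subseteq W$ correctly so that the exponents $l_{uv}-m_u$ and $l_{uv}-m_v$ are nonnegative and really count the free vertices available once $u$ or $v$ is placed in the set.
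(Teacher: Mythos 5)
Your proof is correct, and while your upper bound argument coincides with the paper's (the recursion $\sigma_0(G)=\sigma_0(G-v)+\sigma_0(G-N[v])$ plus monotonicity of $\sigma_0$ under taking induced subgraphs), your lower bound takes a genuinely different route. The paper applies the deletion recursion twice, at $v$ and then at $u$, writing $\sigma_0(G)=\sigma_0((G-v)-u)+\sigma_0((G-v)-N[u])+\sigma_0((G-N[v])-u)+\sigma_0((G-N[v])-N[u])$, and then bounds the first three terms by iterating $\sigma_0(H)\leq 2\sigma_0(H-w)$ over the vertices of $N[v]\cup N[u]$ still present in each; the fourth term is exactly $\sigma_0(G-(N[v]\cup N[u]))$, which is where the $+1$ in the stated denominator comes from. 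You instead use the injection $I\mapsto(I\cap W,\,I\setminus W)$ to get the product estimate $\sigma_0(G)\leq\sigma_0(G[W])\,\sigma_0(G-W)$ with $W=N[v]\cup N[u]$, and then count independent sets of $G[W]$ by a three-way split (neither of $u,v$; $u$ in; $v$ in), correctly using $u,v\in W$ and $N[u],N[v]\subseteq W$ to make the exponents $l_{uv}-2$, $l_{uv}-m_u$, $l_{uv}-m_v$ nonnegative. This avoids the recursion machinery entirely and in fact yields the slightly sharper denominator $2^{l_{uv}-2}+2^{l_{uv}-m_u}+2^{l_{uv}-m_v}$ (tight for $P_2$, where it gives exactly $1/3$), which implies the stated bound a fortiori; the paper's version, by contrast, emerges naturally from the same recursive toolkit (Lemma~\ref{lem:g-v}) used throughout the paper and produces the denominator exactly as stated. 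Your handling of the ``in particular'' claim via $m_u,m_v\geq 2$ is the same as the paper's.
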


\begin{proof}
We return to the recursion
\begin{align*}
\sigma_0(G)=\sigma_0(G-v)+\sigma_0(G-N[v])\,.
\end{align*}
Since $G-(N[v]\cup N[u])$ is an induced subgraph of $G-N[v]$, we have 
\begin{align*}
\sigma_0(G-N[v]) &\geq \sigma_0(G-(N[v]\cup N[u])) \,,\\
1&\geq \frac{\sigma_0(G-v)}{\sigma_0(G)}+\frac{\sigma_0(G-(N[v]\cup N[u]))}{\sigma_0(G)}\,,
\end{align*}
which proves the second inequality. On the other hand, we iterate the inequality
\begin{align*}
\sigma_0(G)=\sigma_0(G-w)+\sigma_0(G-N[w])\leq 2\sigma_0(G-w)\,,
\end{align*}
which holds for all $w\in V$, to obtain
\begin{align*}
\sigma_0((G-v)-u)\leq &2^{|(N[v]\cup N[u])\backslash \{u,v\}|} \sigma_0\big(G-(N[v]\cup N[u]) \big)\\
 &=2^{l_{uv}-2} \sigma_0\big(G-(N[v]\cup N[u]) \big)\,,\\
\sigma_0((G-v)-N[u])\leq & 2^{|(N[v]\cup N[u]) \backslash (\{v\} \cup N[u])|} \sigma_0 \big(G-(N[v]\cup N[u]) \big)\\
&= 2^{l_{uv}-m_u} \sigma_0 \big(G-(N[v]\cup N[u]) \big)
\end{align*}
and
\begin{align*}
\sigma_0((G-N[v])-u)\leq & 2^{|(N[v]\cup N[u]) \backslash (\{u\} \cup N[v])|} \sigma_0 \big(G-(N[v]\cup N[u]) \big)\\
 &= 2^{l_{uv}-m_v} \sigma_0 \big(G-(N[v]\cup N[u]) \big)\,.
\end{align*}
It follows that
\begin{align*}
\sigma_0(G)&=\sigma_0(G-v)+\sigma_0(G-N[v])\\
&= \sigma_0((G-v)-u)+\sigma_0((G-v)-N[u])\\
& ~ ~ + \sigma_0((G-N[v])-u)+\sigma_0((G-N[v])-N[u])\\
&\leq (2^{l_{uv}-2}+2^{l_{uv}-m_u} +2^{l_{uv}-m_v}+1)\sigma_0(G-(N[v]\cup N[u])\,,
\end{align*}
which proves the first inequality in the lemma. In particular, 
\begin{align*}
\sigma_0(G)\leq & (2^{l_{uv}-2}+2^{l_{uv}-2} +2^{l_{uv}-2}+1)\sigma_0(G-(N[v]\cup N[u])\\
&=(3\cdot 2^{l_{uv}-2}+1)\sigma_0(G-(N[v]\cup N[u])\,,
\end{align*}
proving the last inequality.

\end{proof}

\medskip
In the next proposition, we show that $\sigma_1 /\sigma_0$ is a somewhat additive function in the union of graphs, and that $\sigma_0 \leq 3 \sigma_1$ can possibly hold in certain graph classes.

\begin{proposition}\label{prop:1sur3}
Let $G_1$ and $G_2$ be two vertex disjoint graphs. We have
\begin{align*}
\frac{ \sigma_1(G_1 \cup G_2)}{ \sigma_0(G_1 \cup G_2)}=\frac{ \sigma_1(G_1)}{ \sigma_0(G_1)}+\frac{ \sigma_1(G_2)}{ \sigma_0(G_2)}\,.
\end{align*}
Moreover, for every graph $G=(V,E)$ with no isolated vertex and maximum degree $2$,
\begin{align*}
\frac{\sigma_1(G)}{\sigma_0(G)}\geq \frac{1}{3}\,.
\end{align*}
Equality holds for $P_2$.
\end{proposition}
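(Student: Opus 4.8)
\textbf{Proof proposal.} The first identity is essentially immediate, and I would not use induction for it. The plan is to combine the union formula $\sigma_1(G_1\cup G_2)=\sigma_1(G_1)\sigma_0(G_2)+\sigma_1(G_2)\sigma_0(G_1)$ from Lemma~\ref{Lem:1} with the product formula $\sigma_0(G_1\cup G_2)=\sigma_0(G_1)\sigma_0(G_2)$ from Lemma~\ref{0G1UnionG2}, and divide: cancelling $\sigma_0(G_1)\sigma_0(G_2)$ term by term leaves exactly $\sigma_1(G_1)/\sigma_0(G_1)+\sigma_1(G_2)/\sigma_0(G_2)$.

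For the inequality the plan is to exploit the additivity just established. Since $G$ has no isolated vertex and maximum degree at most $2$, every connected component of $G$ is either a path $P_m$ with $m\geq 2$ or a cycle $C_m$ with $m\geq 3$. Writing $G=H_1\cup\cdots\cup H_t$ as the disjoint union of its components and iterating the first identity gives $\sigma_1(G)/\sigma_0(G)=\sum_{i=1}^{t}\sigma_1(H_i)/\sigma_0(H_i)$, a sum of positive terms with at least one summand. It therefore suffices to prove the per-component bound $\sigma_1(H)/\sigma_0(H)\geq 1/3$ for each admissible path and cycle, with equality only for $H=P_2$. The global bound then follows at once, and the equality analysis too: a sum of positive terms each $\geq 1/3$ equals $1/3$ only if there is a single summand of value exactly $1/3$, forcing $G$ to be a single component equal to $P_2$.

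For paths I would set $a_n:=\sigma_0(P_n)$ and $b_n:=\sigma_1(P_n)$ and record the recursions $a_n=a_{n-1}+a_{n-2}$ and, by applying Lemma~\ref{lem:g-v} at a leaf, $b_n=b_{n-1}+b_{n-2}+a_{n-3}$. The key trick is to study the linear combination $c_n:=3b_n-a_n$ directly; the two recursions then combine into $c_n=c_{n-1}+c_{n-2}+3a_{n-3}$. Since $a_{n-3}>0$ and the base values $c_2=0$, $c_3=1$ are nonnegative, a straightforward induction yields $c_n\geq 0$ for all $n\geq 2$, that is $3\sigma_1(P_n)\geq\sigma_0(P_n)$, with equality precisely at $n=2$. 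For cycles I would use the edge decomposition of Lemma~\ref{av1Exp}, which by the symmetry of $C_n$ gives $\sigma_1(C_n)=n\,\sigma_0(P_{n-4})$, together with $\sigma_0(C_n)=a_{n-1}+a_{n-3}$, and verify $3n\,a_{n-4}\geq a_{n-1}+a_{n-3}$ by the same kind of induction after checking $C_3$ and $C_4$ by hand; here the inequality is strict, so no cycle contributes an equality.

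The genuinely substantive step is the per-component analysis, and within it the path case. The main obstacle I anticipate is choosing the right quantity to induct on: tracking the ratio $b_n/a_n$ directly is awkward because of its nonlinearity, whereas the substitution $c_n=3b_n-a_n$ converts the claim into a transparent nonnegativity statement that its own recursion propagates, with the forcing term $3a_{n-3}$ resonating with the homogeneous Fibonacci solution (so in fact $b_n/a_n$ grows without bound). Aligning the cycle recursion with its base cases and confirming strictness there is the only other place that demands care.
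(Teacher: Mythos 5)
Your proof is correct, and while you handle the first identity exactly as the paper does (dividing the union formula of Lemma~\ref{G1UnionG2} by $\sigma_0(G_1\cup G_2)=\sigma_0(G_1)\sigma_0(G_2)$), your treatment of the inequality takes a genuinely different route. The paper only peels off $P_2$-components; for the remaining graphs, where $l_{uv}=|N(u)\cup N(v)|\in\{3,4\}$ for every edge, it splits on the edge count: for $|E|\geq 5$ it combines the edge decomposition $\sigma_1(G)=\sum_{uv\in E}\sigma_0\big(G-(N(u)\cup N(v))\big)$ of Lemma~\ref{av1Exp} with the general lower bound $\sigma_0(G-(N[u]\cup N[v]))/\sigma_0(G)\geq 1/(3\cdot 2^{l_{uv}-2}+1)$ of Lemma~\ref{lem:help} to get $\sigma_1(G)/\sigma_0(G)\geq |E_4|/13+|E_3|/7>1/3$, while for $1\leq |E|\leq 4$ it lists the five possible components $P_5,C_4,P_4,C_3,P_3$ and checks their ratios by hand. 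You instead do a full component-wise analysis: iterated additivity reduces everything to a per-component bound, which you prove for all paths at once via the substitution $c_n=3\sigma_1(P_n)-\sigma_0(P_n)$, whose recursion $c_n=c_{n-1}+c_{n-2}+3a_{n-3}$ with $c_2=0$, $c_3=1$ propagates nonnegativity (strict for $n\geq 3$), and for all cycles via $\sigma_1(C_n)=n\,\sigma_0(P_{n-4})$ against $\sigma_0(C_n)=a_{n-1}+a_{n-3}$; here you do not even need induction, since $a_{m+1}\leq 2a_m$ gives $a_{n-1}+a_{n-3}\leq 10\,a_{n-4}<3n\,a_{n-4}$ for $n\geq 5$, with $C_3$ and $C_4$ checked by hand as you planned. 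Your route is self-contained (it never invokes Lemma~\ref{lem:help}), yields the stronger statement that every admissible component other than $P_2$ has ratio strictly above $1/3$ --- hence a clean characterisation of equality, which the proposition only asserts for $P_2$ --- and shows the path ratios in fact grow without bound; the paper's route buys generality instead, since Lemma~\ref{lem:help} controls edge contributions in an arbitrary graph and avoids classifying components beyond the small-edge-count check. One cosmetic caveat: state the cycle formula $\sigma_1(C_n)=n\,\sigma_0(P_{n-4})$ only for $n\geq 5$ (or adopt the convention $\sigma_0(P_0)=1$ to cover $n=4$), since for $n=3$ it would require a meaning for $P_{-1}$.
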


\begin{proof}
The first identity immediately follows from Lemma~\ref{G1UnionG2}:
\begin{align*}
\frac{ \sigma_1(G_1 \cup G_2)}{ \sigma_0(G_1 \cup G_2)}=\frac{\sigma_1(G_1) \sigma_0(G_2) + \sigma_1(G_2) \sigma_0(G_1)}{\sigma_0(G_1) \sigma_0(G_2)}\,.
\end{align*}
Now let $G=(V,E)$ be a graph with no isolated vertex and maximum degre $2$. All the connected components of $G$ are paths and/or cycles. If $P_2$ is such a component, then set $G_1:=P_2$ and $G_2:=G\backslash V(G_1)$ to obtain
\begin{align*}
\frac{ \sigma_1(G)}{ \sigma_0(G)}=\frac{ \sigma_1(P_2)}{ \sigma_0(P_2)}+\frac{ \sigma_1(G_2)}{ \sigma_0(G_2)}\geq \frac{ \sigma_1(P_2)}{ \sigma_0(P_2)}=\frac{1}{3}\,.
\end{align*}
Otherwise, 
\begin{align*}
l_{uv}:=|N(v)\cup N(u)|\in \{3,4\}
\end{align*}
for all $uv\in E$. Here we distinguish two cases:
\begin{itemize}
\item Case~1: $|E|\geq 5$.\\
We can partition $E$ as $E=E_4 \cup E_3$, where
\begin{align*}
E_4=\{uv\in E:~ l_{uv}=4 \}\quad \text{and} \quad E_3=\{uv\in E:~ l_{uv}=3 \}\,.
\end{align*}
We combine (see Lemma~\ref{lem:help})
\begin{align*}
\frac{\sigma_0(G-(N[v]\cup N[u])}{\sigma_0(G)}\geq \frac{1}{(3\cdot 2^{l_{uv}-2}+1)}\,.
\end{align*}
and (see~Lemma~\ref{av1Exp})
\begin{align*}
\frac{\sigma_1(G)}{\sigma_0(G)}=\sum_{uv \in E} \frac{\sigma_0\big(G-(N(v) \cup N(u))\big)}{\sigma_0(G)}
\end{align*}
to obtain
\begin{align*}
\frac{\sigma_1(G)}{\sigma_0(G)}&\geq \sum_{uv \in E_4} \frac{1}{(3\cdot 2^{4-2}+1)} ~ + ~\sum_{uv \in E_3} \frac{1}{(3\cdot 2^{3-2}+1)}=\frac{|E_4|}{13}+\frac{|E_3|}{7}\\
& \geq \frac{5-|E_3|}{13}+\frac{|E_3|}{7}=\frac{35+6|E_3|}{91}> \frac{1}{3}\,.
\end{align*}
\item Case~2: $1\leq |E|\leq 4$.\\
Here all the connected components of $G$ must belong to the set
\begin{align*}
Q:=\{P_5,C_4,P_4,C_3,P_3\}\,,
\end{align*}
while easy calculations yield
\begin{align*}
&\frac{\sigma_1(P_5)}{\sigma_0(P_5)}=\frac{10}{13}\,, \quad \frac{\sigma_1(C_4)}{\sigma_0(C_4)}=\frac{4}{7}\,, \quad \frac{\sigma_1(P_4)}{\sigma_0(P_4)}=\frac{5}{8}\,, \\
&\frac{\sigma_1(C_3)}{\sigma_0(C_3)}=\frac{3}{4}\,, \quad \frac{\sigma_1(P_3)}{\sigma_0(P_3)}=\frac{2}{5}\,.
\end{align*}
All these values are greater than $1/3$. Now $G=G_1 \cup G_2$ for some $G_1\in Q$. We have
\begin{align*}
\frac{ \sigma_1(G_1 \cup G_2)}{ \sigma_0(G_1 \cup G_2)}=\frac{ \sigma_1(G_1)}{ \sigma_0(G_1)}+\frac{ \sigma_1(G_2)}{\sigma_0(G_2)} > \frac{1}{3}\,,
\end{align*}
which concludes the proof.
\end{itemize}

\end{proof}






\bibliographystyle{abbrv} 
\bibliography{Bib}

\end{document}